\let\uml\"
\newcommand{\GL}{\operatorname{GL}}
\newcommand{\C}{\mathbf{C}}
\newcommand{\ddef}{\colonequals}
\newcommand{\sgn}{\mathsf{sgn}}
\newcommand{\SSS}{\mathcal{S}}
\newcommand{\CCC}{\mathcal{C}}
\numberwithin{equation}{section}
\theoremstyle{plain}
\newtheorem{thm}[equation]{Theorem}
\newtheorem{lem}[equation]{Lemma}
\newtheorem{defn}[equation]{Definition}
\newtheorem{cor}[equation]{Corollary}
\newtheorem{prop}[equation]{Proposition}
\theoremstyle{remark}
\newtheorem{rmk}[equation]{Remark}
\newtheorem{exm}[equation]{Example}
\begin{document}

\title{Unisingular Specht Modules}

\author{John Cullinan}
\address{Department of Mathematics, Bard College, Annandale-On-Hudson, NY 12504, USA}
\email{cullinan@bard.edu}
\urladdr{\url{http://faculty.bard.edu/cullinan/}}

\begin{abstract}
Let $G$ be a finite group and $\rho:G \to \GL(V)$ a finite dimensional representation of $G$.  We say that $\rho$ is unisingular if $\det(1-\rho(g)) = 0$ for all $g \in G$.  Building on previous work in \cite{cullinan}, we consider the symmetric groups $S_n$ and prove that certain families of Specht modules are always unisingular as well as raise new questions for future study.
\end{abstract}

\maketitle

\section{Introduction}

Let $G$ be a finite group and $\rho:G \to \GL(V)$ a finite dimensional representation of $G$.  We call $\rho$ \textsf{unisingular} if $\det(1-\rho(g)) = 0$ for all $g \in G$. Unisingular representations have a multitude of applications in Lie theory and number theory. In arithmetic geometry, for instance, they are used for point-counting on algebraic curves and abelian varieties (see \cite{cz} for an example and overview).  It is a difficult problem in general to characterize, for a given family of groups, all of its unisingular representations; see, for example, \cite{z90} and  \cite{zalesski} for certain finite Chevalley and symplectic groups. Of course, the trivial representation is unisingular for any group,  so we naturally look to classify irreducible unisingular representations.  

In this paper we consider the symmetric groups $S_n$ 
and their ordinary representation theory.  The irreducible representations of $S_n$ are the \textsf{Specht modules} $\SSS^\lambda$, indexed by the partitions $\lambda$ of $n$ (we write $\lambda \vdash n$).  The $\SSS^\lambda$ fully describe the irreducible representations of the $A_n$ as well, though some modifications are required; we review these in Section \ref{background} below.  Since the number of conjugacy classes of a group is equal to the number of its ordinary absolutely irreducible representations, the conjugacy classes $\CCC_\mu$ of $S_n$ are indexed by partions $\mu \vdash n$ as well, corresponding to the cycle type of the $\pi \in \CCC_\mu \subseteq S_n$.  

Recently, in \cite{cullinan},  we focused on applications of unisingular representations to Jacobians of hyperelliptic curves.  In the process, we showed that the Specht modules $\SSS^{(n-2,2)}$ and $\SSS^{(n-2,1^2)}$ were unisingular for $n \geq 4$ and raised the conjecture that $\SSS^{(2^2,1^{n-4})}$ is unisingular if and only if $n \geq 5$ is even.  In this paper we generalize those results by showing that they fit into a larger family of unisingular Specht modules. Before stating our theorems we give a few examples that highlight some of the intricacies in determining which Specht modules are unisingular; we will provide proofs of these statements in the main body of the paper. 

\begin{exm}
Let $n$ be a positive integer.
\begin{enumerate}
\item The trivial representation $\mathbf{1} = \SSS^{(n)}$ is unisingular for $S_n$ for all $n \geq 1$.  Similarly, the sign representation $\sgn = \SSS^{(1^n)}$ is not unisingular once $n > 1$.
\item  The standard representation $\SSS^{(n-1,1)}$ is not unisingular for $S_n$ for any $n>1$; the characteristic polynomial on the $n$-cycles is $\sum_{j=0}^{n-1} x^j$, which does not vanish at 1.  However, every other permutation has 1 as an eigenvalue on $\SSS^{(n-1,1)}$.  Therefore, if $n$ is even (so $A_n$ does not contain any $n$-cycles) then $\SSS^{(n-1,1)}\big|_{A_n}$ is unisingular. 
\item The 5-dimensional representation $\SSS^{(2,2,2)}$ is not unisingular for $S_6$. The characteristic polynomial on the class $\mathcal{C}_{(6)}$ factors as the product of cyclotomic polynomials $\Phi_2(x)\Phi_3(x)\Phi_6(x)$.  This is the first example of a representation other than $\SSS^{(n-1,1)}$ or $\SSS^{(1^n)}$ that is not unisingular for $S_n$.
\item The 14-dimensional representations $\SSS^{(4,4)}$ and $\SSS^{(2,2,2,2)}$ are not unisingular for $S_8$ or $A_8$; note that $\SSS^{(2,2,2,2)} = \SSS^{(4,4)} \otimes \sgn$, hence $\SSS^{(4,4)} \big|_{A_n} = \SSS^{(2,2,2,2)}\big|_{A_n}$.   In both cases we observe that the characteristic polynomial on the class $\CCC_{(5,3)}$ factors as the product of cyclotomic polynomials $\Phi_3(x)\Phi_5(x)\Phi_{15}(x)$.  This is the first example of a representation other than $\SSS^{(n-1,1)}$ that is not unisingular for $A_n$. 
\end{enumerate}
\end{exm}

There are many factors to consider when trying to determine in full generality which Specht modules are unisingular: the parity of $n$, the length and type of partition $\lambda$, the factorization of characteristic polynomials into products of cyclotomic polynomials, etc.  We therefore take a modest approach to understanding unisingular Specht modules and restrict to certain special cases for the purposes of this paper.

We focus almost exclusively on the family of \textsf{Gamma-shaped} Specht modules.  These are the representations 
\[
\SSS^{(n-k,1^k)} = \wedge^k \SSS^{(n-1,1)},
\]
for $k=1,\dots,n-2$, and are so-named because the associated Young diagram of the partition $(n-k,1^k)$ of $n$ consists of a single hook; note that $k=0$ corresponds to the trivial representation $\mathbf{1}$ and the case $k = n-1$ corresponds to the sign representation $\sgn$.  Our main theorem is as follows.

\begin{thm} \label{mainthm}
Let $n\geq 5$ be a positive integer.  Then
\begin{enumerate}
\item $\SSS^{(n-1,1)}$ is not unisingular, and
\item $\SSS^{(n-k,1^k)}$ is unisingular for all $k=2,\dots,n-3$, and
\item $\SSS^{(2,1^{n-2})}$ is unisingular if and only if $n$ is even. 
\end{enumerate}
\end{thm}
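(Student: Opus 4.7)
The plan is to use the identification $\SSS^{(n-k,1^k)}=\wedge^k V$ with $V=\SSS^{(n-1,1)}$, so that unisingularity of $\SSS^{(n-k,1^k)}$ is equivalent to the statement that for every $\pi\in S_n$ some $k$-element multisubset of the eigenvalues of $\pi$ on $V$ has product $1$. For $\pi$ of cycle type $\mu=(\mu_1,\ldots,\mu_r)$, these eigenvalues consist of $r-1$ copies of $1$ together with, for each cycle $\mu_i$, the non-trivial $\mu_i$-th roots of unity; the latter decompose into complex conjugate pairs $\{\zeta,\zeta^{-1}\}$ (each of product $1$) and, when $\mu_i$ is even, a single $-1$. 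Write $s$ for the number of even-length cycles and $P=(n-r-s)/2$ for the total number of conjugate pairs.

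Part (1) is the opening example: the $n$-cycle has eigenvalues exactly the non-trivial $n$-th roots of unity, so $1$ is not an eigenvalue. For part (3), the identity $\SSS^{(2,1^{n-2})}=V\otimes\sgn$ (Specht-module transposition, or $\wedge^{n-2}V\cong V^{\ast}\otimes\det V$ combined with self-duality of $V$ and $\det V=\sgn$) lets me rephrase the question as: does $\sgn(\pi)$ appear among the eigenvalues of $V(\pi)$? For even $\pi$ this demands that $1$ be an eigenvalue of $V(\pi)$, which fails only when $\pi$ is an $n$-cycle, and an $n$-cycle lies in $A_n$ iff $n$ is odd. For odd $\pi$ the requirement is that $-1$ be an eigenvalue of $V(\pi)$, which is automatic since an odd permutation has at least one even-length cycle. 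Thus $\SSS^{(2,1^{n-2})}$ is unisingular iff every $n$-cycle is odd, iff $n$ is even.

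For part (2), I aim to show that for every $\mu\vdash n$ and every $2\leq k\leq n-3$ there is a $k$-subset of eigenvalues with product $1$. If $r\geq k+1$, take $k$ copies of $1$. If $2\leq r\leq k$, choose $a\in\{0,1,\ldots,r-1\}$ with $a\equiv k\pmod{2}$ and write $k-a=b+2p$ with $b$ even, $0\leq b\leq s$, $0\leq p\leq P$ (for instance via $p=\min(P,(k-a)/2)$ and $b=k-a-2p$); the identity $s+2P=n-r$, together with $r\geq 2$ and $k\leq n-3$, guarantees a valid triple $(a,b,p)$ in every sub-case, whether $s$ is even or odd. The delicate case is $r=1$ (the $n$-cycle), where one must exhibit a $k$-subset of $\{1,\ldots,n-1\}$ summing to $0\pmod{n}$. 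For $k$ even, take the pairs $\{j,n-j\}$ for $j=1,\ldots,k/2$. For $k$ odd and $n\geq 6$, start with $\{1,2,n-3\}$ (sum $n$) and adjoin $(k-3)/2$ disjoint pairs $\{j,n-j\}$ with $4\leq j<n/2$; a direct count shows this succeeds for all odd $k\leq n-4$ when $n$ is odd and for all odd $k\leq n-5$ when $n$ is even. The remaining boundary case $n$ even and $k=n-3$ is handled by complementation: the $2$-set $\{1,n/2-1\}$ has sum $n/2\equiv n(n-1)/2\pmod{n}$, so its $(n-3)$-element complement sums to $0\pmod{n}$.

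The main obstacle is the $n$-cycle sub-case of part (2) with $k$ odd, since neither copies of $1$ nor any conjugate pair structure from other cycles are available; the explicit subset-sum constructions succeed precisely because $k\leq n-3$ leaves enough room across parities of $n$.
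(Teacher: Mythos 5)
Your proposal is correct and proceeds from the same starting point as the paper — identifying $\SSS^{(n-k,1^k)}=\wedge^k\SSS^{(n-1,1)}$ and reducing unisingularity to finding a $k$-element multisubset of $\Lambda_\mu$ with product $1$ — but the organization of part (2) is genuinely different. The paper splits $k\in[2,n-3]$ into a ``wide and short'' range $k\leq\lfloor(n-1)/2\rfloor$ (Theorem \ref{mainthm1}) and a ``narrow and tall'' range (Theorem \ref{mainthm2}), handling the latter by a conjugation/sign-twist argument via Lemma \ref{switch}; this confines the awkward $n$-cycle subset-sum constructions to $k\leq\lfloor(n-1)/2\rfloor$, where there is ample room. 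You instead treat all $k\in[2,n-3]$ in one sweep. For $r\geq2$ you replace the paper's averaging estimate (Case 4 of Theorem \ref{mainthm1}) with an explicit $(a,b,p)$ bookkeeping — $a$ copies of $1$, $b$ copies of $-1$, $p$ conjugate pairs — which is a cleaner and arguably more transparent accounting. For $r=1$ your constructions have to work over the full range of $k$, which forces the extra boundary case ($n$ even, $k=n-3$, handled by complementation); the paper's conjugation trick avoids this by never leaving $k\leq\lfloor(n-1)/2\rfloor$. Your treatment of part (3) is also a streamlined version of the paper's Theorem \ref{(2,1^{n-2})}: rather than factoring $x^n-1$ for $n$-cycles and then case-splitting over non-$n$-cycles, you reduce directly to ``does $\sgn(\pi)$ occur among the eigenvalues of $\pi$ on $\SSS^{(n-1,1)}$?'', dispatching both parities at once.

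One small caution: in the $2\leq r\leq k$ case, the sample recipe ``$p=\min(P,(k-a)/2)$, $b=k-a-2p$'' does not always give $b\leq s$ for the smallest admissible $a$ of the right parity. For example $n=10$, $\mu=(2,2,2,2,2)$, $k=7$ gives $s=5$, $P=0$, and $a=1$ forces $b=6>5$. You do allow $a$ to range over all of $\{0,\ldots,r-1\}$, and the existence of a valid $a$ does follow — one needs $a\geq k-(n-r)+(s\bmod 2)$, and since $k\leq n-3$ the interval $[\max(0,k-n+r+(s\bmod2)),\,r-1]$ has length at least $2$, hence contains an $a$ of either parity — but this little feasibility check is exactly what the paper's averaging argument is doing implicitly, and it deserves a sentence rather than being left as ``guarantees a valid triple.'' With that detail filled in, the argument is complete.
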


\begin{rmk}
For small values of $n$, we can  determine which Specht modules are unisingular by computer calculation, regardless of whether they are Gamma-shaped or not.  We present these data in the final section of the paper. 
\end{rmk}

As an auxiliary result we prove the conjecture raised in \cite{cullinan}.

\begin{thm} \label{(2^2,1^{n-4})}
Let $n \geq 5$.  Then the Specht module $\SSS^{(2^2,1^{n-4})}$ is unisingular if and only if $n$ is even.
\end{thm}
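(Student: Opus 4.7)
The plan is to reduce to an eigenvalue calculation by invoking the conjugate-partition identity
\[
\SSS^{(2^2, 1^{n-4})} \;\cong\; \SSS^{(n-2, 2)} \otimes \sgn.
\]
Since \cite{cullinan} already establishes that $\SSS^{(n-2, 2)}$ is unisingular for $n\geq 4$, every even permutation $g$ automatically contributes the eigenvalue $+1$ to $\rho_{(2^2, 1^{n-4})}(g) = \sgn(g)\,\rho_{(n-2, 2)}(g)$. The theorem therefore reduces to showing that $\SSS^{(2^2,1^{n-4})}$ is unisingular if and only if $-1$ occurs as an eigenvalue of $\rho_{(n-2,2)}(g)$ for every odd $g \in S_n$.

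To compute this eigenvalue multiplicity, I would use the decomposition of the permutation representation on two-element subsets of $\{1,\ldots,n\}$,
\[
\C[X_2] \;\cong\; \SSS^{(n)} \oplus \SSS^{(n-1, 1)} \oplus \SSS^{(n-2, 2)} \qquad (n \geq 4),
\]
together with the standard fact that the multiplicity of $-1$ as an eigenvalue of $g$ on any permutation representation equals the number of $g$-orbits of even size. Subtracting the contribution from $\C[\{1,\ldots,n\}] \cong \SSS^{(n)} \oplus \SSS^{(n-1,1)}$, in which $-1$ appears with multiplicity equal to the number of even-length cycles of $g$, one finds that $-1$ is an eigenvalue of $\rho_{(n-2,2)}(g)$ exactly when $N_e(g) > E(g)$, where $N_e(g)$ counts even-sized $g$-orbits on $X_2$ and $E(g)$ is the number of even-length cycles of $g$.

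For the \emph{only if} direction (assume $n$ odd), I would exhibit $g \in S_n$ of cycle type $(n-2, 2)$. Its sign is $(-1)^{n-2} = -1$, the $(n-2)$-cycle produces only odd-size orbits on the pairs within it, the transposition contributes a single fixed pair, and the mixed pairs form a single orbit of size $\mathrm{lcm}(n-2, 2) = 2(n-2)$; hence $N_e(g) = E(g) = 1$ and $-1$ is not an eigenvalue, which shows $\SSS^{(2^2,1^{n-4})}$ is not unisingular. For the \emph{if} direction (assume $n$ even), I would bound $N_e(g)$ below from the cycle structure of $g$ and case-analyze on $p \ddef E(g)$, which is odd because $g$ is. A cycle of length $2a \geq 4$ contributes $(a-1) + [a \text{ even}]$ even orbits within itself, and two cycles of lengths $\mu_i, \mu_j$ with at least one even contribute $\gcd(\mu_i, \mu_j)$ even-sized orbits between them. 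The dangerous boundary configuration $p = q = 1$, in which the argument would be tight, is excluded for even $n$ by the parity identity $n = \sum \mu_i$ (which forces $q$ to be even), so the analysis splits into $p = 1, q = 0$ (a single $n$-cycle, $n \geq 6$), $p = 1, q \geq 2$, and $p \geq 3$; the lower bounds $N_e \geq 2$, $N_e \geq q \geq 2$, and $N_e \geq 2\binom{p}{2} > p$ respectively suffice. The main obstacle is bookkeeping rather than depth: one has to keep track of the special half-length orbit inside an even cycle and ensure that the parity constraint on $q$ is used at exactly the right point to rule out $p = q = 1$.
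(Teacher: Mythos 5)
Your proof is correct, and its skeleton coincides with the paper's: twist by $\sgn$ through the conjugate partition, dispose of even permutations via the known unisingularity of $\SSS^{(n-2,2)}$ (Lemma \ref{basic_lem}), and for odd $\pi$ compare the multiplicity of $-1$ on the permutation module on $2$-subsets (the paper's $\mathcal{M}^{(n-2,2)}$) with the number of even-length cycles. Your criterion $N_e(g)>E(g)$ is exactly Proposition \ref{reduction}, and your computation for the class $\mathcal{C}_{(n-2,2)}$ when $n$ is odd reproduces Lemma \ref{big_mult} and Corollary \ref{nonunicor}. The endgame, however, is genuinely different. The paper (Proposition \ref{uni_prop}) extracts the surplus copies of $-1$ mainly from orbits of pairs lying \emph{inside} a single even cycle (Proposition \ref{char_div}), which forces a separate treatment of parts equal to $2$ (Lemma \ref{aux_lem}) and a layered case analysis ending in ad hoc sub-cases. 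You extract them from \emph{cross-cycle} orbits: whenever at least one of $\mu_i,\mu_j$ is even, the $\mu_i\mu_j$ mixed pairs fall into $\gcd(\mu_i,\mu_j)$ orbits, each of even size $\lcm(\mu_i,\mu_j)$. Writing $p=\mathsf{E}(\mu)$ (odd, since $\pi$ is odd) and $q$ for the number of odd parts --- you should state explicitly that $q$ counts odd parts with fixed points included, since that is what makes $q\equiv n\ (\mathrm{mod}\ 2)$ --- the cases $p=1,q=0$; $p=1,q\ge 2$; and $p\ge 3$ give $N_e\ge 2$, $N_e\ge q$, and $N_e\ge p(p-1)$ respectively, each exceeding $p$, while the tight configuration $p=q=1$ (whose extremal instance is precisely $\mathcal{C}_{(n-2,2)}$) is excluded by the parity of $n$. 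Your version is shorter, needs the single-cycle orbit structure only in the $n$-cycle case, and makes the role of the parity hypothesis transparent; the paper's version computes more refined divisibility information about $M_\mu(x)$ along the way, which it also uses to pin down the offending class exactly.
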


We conclude this paper with some general observations for future study.  It is our hope that the methods of this paper will be useful in determining all unisingular Specht modules, regardless of $\lambda$.

\section{Notation and Background} \label{background}

For a detailed overview of the symmetric group and its representations, see \cite{fh}, \cite{james}, or \cite{sagan}; we adopt most of the notation of \cite{sagan} here.  Due to these existing thorough treatments, we will give a very brief overview of the symmetric group and its representations here, and only introduce the necessary definitions for the purposes of this paper.

\subsection{Partitions} Let $n$ be a positive integer. A \textsf{partition} $\lambda = (\lambda_1,\dots,\lambda_r)$ of $n$ is a vector of non-increasing positive numbers $\lambda_i$ such that $\sum \lambda_i = n$; if $\lambda$ is a partition of $n$, then we write $\lambda \vdash n$.  A convenient mechanism for studying partitions is the \textsf{Young tableau} $t_{\lambda}$ of shape $\lambda$ \cite[p.~20]{sagan}, a left-justified array with $\lambda_i$ boxes in the $i$th row.  If $\lambda \vdash n$ then we define $\lambda'$ to be the \textsf{conjugate} partition, whose associated $t_{\lambda'}$ is the reflection of $t_{\lambda}$ across the diagonal.  For example, if $\lambda = (6,6,4,1) \vdash 17$, then $\lambda' = (4, 3, 3, 3, 2, 2)$. 

The conjugacy classes (and thus the irreducible representations) of $S_n$ are indexed by the partitions of $n$.  We write $\mathcal{C}_\lambda$ for the conjugacy class of permutations $\pi$ that decompose as $\pi = \sigma_1 \cdots \sigma_r$, where the $\sigma_i$ are disjoint cycles of length $\lambda_i$. We write $\SSS^\lambda$ for the irreducible representation indexed by $\lambda$.

\subsection{Specht Modules}  let $\lambda \vdash n$.  The irreducible representations of $S_n$ are known as the Specht modules and the dimension of $\SSS^\lambda$ is denoted by $f^\lambda$, which is given by the well-known \textsf{hook length formula} \cite[Thm.~3.10.2]{sagan}.  Related to the Specht modules are the \textsf{permutation modules} $\mathcal{M}^\lambda$, which contain certain Specht modules as direct summands: $\mathcal{M}^\lambda = \oplus_\mu K_{\mu\lambda}\SSS^{\mu}$ (for more information see \cite[Thm.~2.11.2]{sagan}).  There are several special cases of this decomposition that are of particular interest to us.

\begin{itemize}
\item If $\lambda = (n)$, then $\mathcal{M}^{(n)} = \SSS^{(n)} = \mathbf{1}$, the trivial representation.
\item If $\lambda = (n-1,1)$, then $\mathcal{M}^{(n-1,1)}$ is the natural $n$-dimensional representation of $S_n$, $\SSS^{(n-1,1)}$ is the standard representation,  and 
\begin{align} \label{standard_decomp}
\mathcal{M}^{(n-1,1)} = \mathbf{1} \oplus \SSS^{(n-1,1)}.
\end{align}   
\item Similar to the previous case, one can show (see, for example, \cite[p.~14]{cullinan}) that
\begin{align} 
\mathcal{M}^{(n-2,2)} &= \SSS^{(n-2,2)} \oplus \mathcal{M}^{(n-1,1)} \label{n-2specht} \\
&= \SSS^{(n-2,2)} \oplus \mathcal{S}^{(n-1,1)} \oplus \SSS^{(n)}.
\end{align}
\item The 1-dimensional Specht module $\SSS^{(1^n)}$ is the sign representation $\sgn$.  It is well known that 
\begin{align} \label{sgn_decomp}
\sgn \otimes \SSS^{\lambda} = \SSS^{\lambda'}
\end{align}
for all $\lambda \vdash n$.  The representations $\SSS^\lambda$ and $\SSS^{\lambda'}$ are isomorphic upon restriction to $A_n$ since $\sgn$ is trivial on even permutations.
\item Continuing, the Specht module $\SSS^\lambda$ remains irreducible upon restriction to $A_n$ unless $t_\lambda  = t_{\lambda'}$.  In that case, $\SSS^\lambda\big|_{A_n}$ is the direct sum of two irreducible representations of the same dimension.  This completely characterizes the irreducible representations of $A_n$. 
\item If $\lambda = (n-k,1^k)$ for $k=1,\dots,n-1$, then $t_\lambda$ consists of a single hook; in these cases we say that $\lambda$ (or $\SSS^\lambda$) is {Gamma-shaped}. It is known that (see \cite[Ex.~4.6]{fh}) Gamma-shaped Specht modules are simply the exterior powers of $\SSS^{(n-1,1)}$:
\begin{align} \label{wedge}
\wedge^k \SSS^{(n-1,1)} = \SSS^{(n-k,1^k)}.
\end{align}
In the extreme case this recovers the well-known identity 
\[
\wedge^{n} \mathbf{1} = \wedge^{n}\SSS^{(n)} = \SSS^{(1^n)} = \sgn.
\]
It is the Gamma-shaped partitions that we are primarily concerned with in this paper. 
\end{itemize}

\subsection{Linear Algebra} To determine whether a given Specht module is unisingular, we need to show that the characteristic polynomials of all permutations vanish at $x=1$; we set some notation in anticipation of these calculations. Recall that characteristic polynomials are constant on conjugacy classes.

We will use the following facts from linear algebra frequently over the course of the paper.  If $A$ is a linear transformation on an $a$-dimensional vector space $V$ with eigenvalues $\alpha_j$, and $B$ is a linear transformation on a $b$-dimensional vector space $W$ with eigenvalues $\beta_k$, then the characteristic polynomial of $A \otimes B$ on $V \otimes W$ is 
\begin{align} \label{tensor_evalue}
c_{A \otimes B} (x) \prod_{j,k} (x-\alpha_j\beta_k).
\end{align}
Additionally, if $\wedge^pA$ denotes the induced linear transformation on the $\binom{a}{p}$-dimensional vector space $\wedge^pV$, then its characteristic polynomial is 
\begin{align} \label{ext_evalue}
c_{\wedge^pA}(x) = \prod_{i_1 < i_2 < \cdots < i_p} (x-\alpha_{i_1}\alpha_{i_2} \cdots \alpha_{i_p}).
\end{align}
Now we apply this formalism to the modules that concern us, with particular attention to the permutation module $\mathcal{M}^{(n-1,1)}$.

Fix a partition $\mu = (\mu_1,\dots,\mu_r) \vdash n$ and consider $\pi \in \mathcal{C}_\mu \subseteq S_n$; write 
\[
\pi = \sigma_1 \dots \sigma_r,
\]
where $\sigma_i$ is a cycle of length $\mu_i$.  Consider the $n$-dimensional permutation module $\mathcal{M}^{(n-1,1)}$.  Conjugating $\pi$ if necessary, we can assume that the action of $\pi$ on $\mathcal{M}^{(n-1,1)}$ is via the block matrix
\[
\begin{pmatrix} \fbox{$\sigma_1$} \\ & \fbox{$\sigma_2$} \\ && \ddots \\ &&& \fbox{$\sigma_r$}  
\end{pmatrix}, 
\]
which evidently has characteristic polynomial
\begin{align} \label{char_poly_std}
m_\mu(x) = \prod_{j=1}^r (x^{\mu_j} - 1).
\end{align}

From this, we obtain some simple but important results that we will use extensively throughout the paper.

\begin{prop} \label{std_evalue}
Suppose $\mu = (n_1,\dots,n_r) \vdash n$ and $\pi \in \mathcal{C}_\mu$.  Let $z_{\mu_i}$ be a primitive $\mu_{i}$-th root of unity.  Then the characteristic polynomial of $\pi$ on $\SSS^{(n-1,1)}$ has roots
\begin{align} \label{Emu}
\Lambda_\mu \ddef \lbrace \underbrace{1,\dots,1}_{(r-1)\text{ times}} \rbrace \cup \bigcup_{i=1}^r \lbrace z_{\mu_i},z_{\mu_i}^2,\dots, z_{\mu_i}^{\mu_i-1} \rbrace.
\end{align}
\end{prop}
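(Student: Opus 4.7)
The plan is to read off the eigenvalues of $\pi$ on $\SSS^{(n-1,1)}$ from the decomposition $\mathcal{M}^{(n-1,1)} = \mathbf{1} \oplus \SSS^{(n-1,1)}$ given in \eqref{standard_decomp}, combined with the explicit characteristic polynomial \eqref{char_poly_std} of $\pi$ on the permutation module. Since characteristic polynomials are multiplicative under direct sums, the eigenvalues of $\pi$ on $\SSS^{(n-1,1)}$ are precisely the eigenvalues of $\pi$ on $\mathcal{M}^{(n-1,1)}$ with one copy of $1$ removed, corresponding to the trivial summand.

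So the first step is to factor $m_\mu(x)$ explicitly into roots of unity. For each cycle length $\mu_i$, the factor $x^{\mu_i} - 1$ contributes the full set of $\mu_i$-th roots of unity, namely the trivial root $1$ together with the non-trivial roots $z_{\mu_i}, z_{\mu_i}^2, \dots, z_{\mu_i}^{\mu_i - 1}$. Aggregating over all $r$ cycles shows that the multiset of eigenvalues of $\pi$ on $\mathcal{M}^{(n-1,1)}$ consists of exactly $r$ copies of $1$ together with $\bigcup_{i=1}^r \{z_{\mu_i}, z_{\mu_i}^2, \dots, z_{\mu_i}^{\mu_i - 1}\}$.

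The second step is simply to divide out the factor $(x-1)$ coming from the trivial summand $\mathbf{1}$ in \eqref{standard_decomp}. This reduces the number of copies of $1$ from $r$ to $r-1$ while leaving all non-trivial roots of unity untouched, yielding exactly the set $\Lambda_\mu$ described in \eqref{Emu}. A quick dimension check confirms consistency: the total count is $(r-1) + \sum_{i=1}^r (\mu_i - 1) = (r-1) + n - r = n-1 = \dim \SSS^{(n-1,1)}$.

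There is no real obstacle here; the result is essentially bookkeeping once one pairs \eqref{standard_decomp} with \eqref{char_poly_std}. The only point that requires a sentence of care is the multiplicity of the eigenvalue $1$: each cyclic block of length $\mu_i$ contributes a single $1$ (not $\mu_i$ copies), because only the factor $x=1$ of $x^{\mu_i} - 1$ corresponds to the trivial $\mu_i$-th root of unity.
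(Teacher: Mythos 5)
Your proof is correct and follows exactly the paper's argument: factor $m_\mu(x)=\prod_i(x^{\mu_i}-1)$ via \eqref{char_poly_std} to get the eigenvalues on $\mathcal{M}^{(n-1,1)}$, then strip one eigenvalue $1$ using the decomposition \eqref{standard_decomp}. The added dimension check and the remark on the multiplicity of $1$ are fine but not needed beyond what the paper states.
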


\begin{proof}
By (\ref{char_poly_std}), the eigenvalues of $\pi$ on $\mathcal{M}^{(n-1,1)}$ are the $\mu_i$-th roots of unity, for each $i=1,\dots,r$.  Now apply (\ref{standard_decomp}). 
\end{proof}

\begin{cor} \label{neg_cor}
Let $n >1$ and let $\mu = (\mu_1,\dots,\mu_r) \vdash n$.  Let $\pi \in \mathcal{C}_\mu \subseteq S_n$.  If $\pi$ is an odd permutation then $\pi$ has eigenvalue $-1$ on $\mathcal{S}^{(n-1,1)} \otimes \sgn$ of multiplicity $r-1$.
\end{cor}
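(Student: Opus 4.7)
The plan is to read off the eigenvalues of $\pi$ on $\SSS^{(n-1,1)}$ via the previous proposition and then track what happens when one twists by $\sgn$. By Proposition~\ref{std_evalue}, the multiset $\Lambda_\mu$ of eigenvalues of $\pi$ on $\SSS^{(n-1,1)}$ consists of exactly $r-1$ copies of $1$ together with, for each $i=1,\dots,r$, the non-trivial $\mu_i$-th roots of unity $z_{\mu_i}, z_{\mu_i}^2,\dots, z_{\mu_i}^{\mu_i-1}$.

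Second, I would invoke the tensor-product formula (\ref{tensor_evalue}) with $B$ the one-dimensional representation $\sgn$. Because $\sgn$ is one-dimensional, its single eigenvalue on $\pi$ is the scalar $\sgn(\pi)$, so tensoring simply multiplies every eigenvalue of $\pi$ on $\SSS^{(n-1,1)}$ by $\sgn(\pi)$. Under the hypothesis that $\pi$ is odd this scalar equals $-1$, so the eigenvalues of $\pi$ on $\SSS^{(n-1,1)}\otimes\sgn$ are exactly the multiset $-\Lambda_\mu$.

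Finally, I would count the copies of $-1$ in $-\Lambda_\mu$. The $r-1$ copies of $1$ in $\Lambda_\mu$ contribute $r-1$ copies of $-1$ after negation. The only other potential contributions would come from indices with $-z_{\mu_i}^j=-1$, i.e.\ $z_{\mu_i}^j=1$, but this is impossible in the range $1\le j\le \mu_i-1$ because $z_{\mu_i}$ is a primitive $\mu_i$-th root of unity. Hence the multiplicity of $-1$ is exactly $r-1$, as claimed.

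The argument is genuinely a corollary of Proposition~\ref{std_evalue} and the tensor-product formula, so there is no real obstacle; the only point worth checking carefully is the last one, ensuring no stray $-1$'s enter from the second part of $\Lambda_\mu$, and this is immediate from the primitivity of $z_{\mu_i}$.
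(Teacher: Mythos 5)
Your proposal is correct and follows the same route as the paper: read the eigenvalue multiset $\Lambda_\mu$ off Proposition~\ref{std_evalue}, then negate everything via (\ref{tensor_evalue}) since $\sgn(\pi)=-1$. Your extra check that no non-trivial root of unity $z_{\mu_i}^j$ ($1\le j\le \mu_i-1$) contributes a stray $-1$ after negation is a small refinement the paper leaves implicit, but the argument is otherwise identical.
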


\begin{proof}
By Proposition \ref{std_evalue}, $\pi$ has eigenvalue 1 of multiplicity $r-1$ on $\SSS^{(n-1,1)}$.  If $\pi$ is odd, then $\sgn (\pi) = -1$ and thus the eigenvalues of $\pi$ on $\mathcal{S}^{(n-1,1)} \otimes \sgn$ are exactly the negatives of the eigenvalues of $\pi$ on $\SSS^{(n-1,1)}$, by (\ref{tensor_evalue}). 
\end{proof}

We finish this section with a two simple but key observations that will streamline our proofs below. 

\begin{prop} \label{same_even}
Let $\lambda \vdash n$ and let $\pi$ be an even permutation.  Then $\pi$ has eigenvalue 1 on $\SSS^\lambda$ if and only if $\pi$ has eigenvalue 1 on $\SSS^{\lambda'}$.
\end{prop}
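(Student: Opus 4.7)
The plan is to reduce the claim immediately to the tensor-product identity (\ref{sgn_decomp}): namely $\SSS^{\lambda'} \cong \sgn \otimes \SSS^{\lambda}$. Since $\sgn$ is one-dimensional, formula (\ref{tensor_evalue}) specializes to say that the eigenvalues of any $\pi \in S_n$ acting on $\SSS^{\lambda'}$ are exactly $\{\sgn(\pi)\cdot \alpha : \alpha \text{ an eigenvalue of } \pi \text{ on } \SSS^\lambda\}$, with multiplicities.

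Now I would specialize to the case $\pi$ even. Then $\sgn(\pi) = 1$, so the two eigenvalue multisets coincide. In particular the characteristic polynomials of $\pi$ on $\SSS^\lambda$ and $\SSS^{\lambda'}$ are equal, and the assertion about the eigenvalue $1$ follows as a special case. (One might note in passing that this is really the same observation underlying the fact, mentioned in Section \ref{background}, that $\SSS^\lambda$ and $\SSS^{\lambda'}$ become isomorphic upon restriction to $A_n$.) There is no genuine obstacle here; the proposition is essentially a one-line consequence of (\ref{sgn_decomp}) packaged for later use, and its main role will be to cut in half the casework when verifying unisingularity on the alternating group in the proofs of Theorems \ref{mainthm} and \ref{(2^2,1^{n-4})}.
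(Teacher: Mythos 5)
Your proof is correct and follows exactly the paper's own argument: apply $\SSS^{\lambda'} = \sgn \otimes \SSS^{\lambda}$ together with (\ref{tensor_evalue}) and the fact that $\sgn(\pi)=1$ for even $\pi$ to conclude the characteristic polynomials coincide. No differences worth noting.
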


\begin{proof}
Since $\pi$ is even, $\sgn(\pi) = 1$, and since $\SSS^{\lambda'} = \SSS^\lambda \otimes \sgn$, it follows from (\ref{tensor_evalue}) that the characteristic polynomials of $\pi$ on $\SSS^{\lambda'}$ and $\SSS^{\lambda}$ are identical.
\end{proof}

\begin{lem} \label{switch}
Let $\lambda \vdash n$ and let $\pi \in S_n$ be an odd permutation.  Then $\pi$ has 1 as an eigenvalue on $\SSS^{\lambda}$ if and only if $\pi$ has $-1$ as an eigenvalue on $\SSS^{\lambda'}$.
\end{lem}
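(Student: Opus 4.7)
The plan is to mimic the proof of Proposition \ref{same_even} but exploit the fact that $\sgn(\pi) = -1$ when $\pi$ is odd. The key input is identity (\ref{sgn_decomp}), which asserts $\SSS^{\lambda'} = \SSS^\lambda \otimes \sgn$, together with the tensor product eigenvalue formula (\ref{tensor_evalue}).

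First I would fix $\pi$ odd and let $\alpha_1,\dots,\alpha_d$ be the eigenvalues of $\pi$ acting on $\SSS^\lambda$, where $d = f^\lambda$. Since $\sgn$ is a one-dimensional representation whose sole eigenvalue at $\pi$ is $\sgn(\pi) = -1$, applying (\ref{tensor_evalue}) to the tensor product decomposition (\ref{sgn_decomp}) shows that the eigenvalues of $\pi$ on $\SSS^{\lambda'}$ are precisely $-\alpha_1,\dots,-\alpha_d$.

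From this the equivalence is immediate: $1 \in \{\alpha_1,\dots,\alpha_d\}$ if and only if $-1 \in \{-\alpha_1,\dots,-\alpha_d\}$, which is exactly the claim. There is no real obstacle here; the statement is essentially the odd-permutation analogue of Proposition \ref{same_even} and uses the same one-line computation that drives Corollary \ref{neg_cor}.
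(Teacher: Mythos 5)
Your proof is correct and follows exactly the same route as the paper: apply $\SSS^{\lambda'} = \SSS^{\lambda}\otimes\sgn$ together with (\ref{tensor_evalue}) to see that the eigenvalues negate, then read off the equivalence. No differences worth noting.
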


\begin{proof}
Since $\pi$ is odd, $\sgn (\pi) = -1$.  By (\ref{tensor_evalue}), if $\lbrace \alpha_i \rbrace$ are the eigenvalues of $\pi$ on $\SSS^{\lambda}$, then $\lbrace -\alpha_i \rbrace$ are the eigenvalues of $\pi$ on $\SSS^{\lambda'}$, and the set $\lbrace -\alpha_i \rbrace$ contains 1 if and only if $\lbrace \alpha_i \rbrace$ contains $-1$.
\end{proof}

Equipped with this background information, we turn to the main business of the paper over the next sections.

\section{Gamma-Shaped Specht Modules}

In this section we study the $\SSS^{\lambda}$ for $\lambda = (n-k,1^k)$ and prove that they are all unisingular unless $k=1$ (in which case it is never unisingular) or $k=n-2$, in which case it is unisingular if and only if $n$ is even.  We start by proving these two statements.  Recall that we reserve the notation $m_\mu(x)$ for the characteristic polynomial of a permutation $\pi \in \mathcal{C}_\mu \subseteq S_n$ acting on $\mathcal{M}^{(n-1,1)}$.

\begin{prop} \label{(n-1,1)}
For $n \geq 2$, the Specht module $\SSS^{(n-1,1)}$ is not unisingular.
\end{prop}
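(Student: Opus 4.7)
The plan is to exhibit a single permutation $\pi \in S_n$ for which $1$ is not an eigenvalue of $\pi$ acting on $\SSS^{(n-1,1)}$; this suffices to show non-unisingularity. The natural candidate, highlighted already in the introductory example, is an $n$-cycle.

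Concretely, I would take $\pi \in \CCC_{(n)}$, so $\mu = (n)$ has a single part and hence $r = 1$. Applying Proposition \ref{std_evalue} directly with this $\mu$, the eigenvalue set is
\[
\Lambda_{(n)} = \{z_n, z_n^2, \dots, z_n^{n-1}\},
\]
since the block of $r-1 = 0$ ones is empty. These are precisely the nontrivial $n$-th roots of unity, none of which equal $1$. Therefore $\det(1 - \pi|_{\SSS^{(n-1,1)}}) = \prod_{j=1}^{n-1}(1 - z_n^j) = n \neq 0$, and $\SSS^{(n-1,1)}$ fails to be unisingular.

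There is essentially no obstacle here: the statement is an immediate corollary of Proposition \ref{std_evalue} once an $n$-cycle is chosen. The only remark worth making in the write-up is why this explicit choice is forced — any permutation with at least two cycles would contribute an eigenvalue $1$ by Proposition \ref{std_evalue} (since $r - 1 \geq 1$), so the $n$-cycles are the unique obstruction to unisingularity in this case. This also explains the asymmetry with part (3) of Theorem \ref{mainthm}: when $n$ is even there are no $n$-cycles in $A_n$, so $\SSS^{(n-1,1)}|_{A_n}$ can (and does, as noted in the introduction) become unisingular after restriction, but at the level of $S_n$ the $n$-cycle alone already obstructs it.
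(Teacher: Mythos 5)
Your proof is correct and follows essentially the same route as the paper: both take $\pi$ an $n$-cycle and use the decomposition $\mathcal{M}^{(n-1,1)} = \mathbf{1} \oplus \SSS^{(n-1,1)}$ to conclude the eigenvalues on $\SSS^{(n-1,1)}$ are the nontrivial $n$-th roots of unity. You cite Proposition~\ref{std_evalue} (the packaged eigenvalue statement) while the paper cites its ingredients (\ref{char_poly_std}) and (\ref{standard_decomp}) directly, but this is a purely cosmetic difference.
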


\begin{proof}
Let $\mu = (n)$ and let $c_{(n)}(x)$ denote the characteristic polynomial of $n$-cycles on $\SSS^{(n-1,1)}$.  By (\ref{char_poly_std}), we have $m_{(n)}(x) = x^n-1$ and by (\ref{standard_decomp}), we have $m_{(n)}(x) = c_{(n)}(x)(x-1)$, whence $c_{(n)}(1) \ne 0$.
\end{proof}

\begin{thm}  \label{(2,1^{n-2})}
The Specht module $\SSS^{(2,1^{n-2})}$ is unisingular if and only if $n$ is even.
\end{thm}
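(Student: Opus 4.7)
The plan is to exploit the identity $(2,1^{n-2})' = (n-1,1)$, so that $\SSS^{(2,1^{n-2})} = \SSS^{(n-1,1)} \otimes \sgn$ by (\ref{sgn_decomp}). This lets us reduce every eigenvalue question on $\SSS^{(2,1^{n-2})}$ to one on the already-understood standard representation $\SSS^{(n-1,1)}$, using Proposition \ref{same_even} for even $\pi$ and Lemma \ref{switch} for odd $\pi$. I would split the argument into the even-permutation case and the odd-permutation case, with the parity of $n$ ultimately controlling only the former.

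First I would handle odd $\pi \in \mathcal{C}_\mu$ with $\mu = (\mu_1,\dots,\mu_r)$. By Lemma \ref{switch}, $\pi$ has $1$ as an eigenvalue on $\SSS^{(2,1^{n-2})}$ iff it has $-1$ as an eigenvalue on $\SSS^{(n-1,1)}$. By Proposition \ref{std_evalue}, the eigenvalues of $\pi$ on $\SSS^{(n-1,1)}$ include the non-trivial $\mu_i$-th roots of unity for each cycle, so $-1$ is among them as soon as some $\mu_i$ is even. But the sign of $\pi$ is $\prod_i(-1)^{\mu_i-1}$, so $\sgn(\pi) = -1$ forces an odd number of even parts $\mu_i$, in particular at least one. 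Hence every odd $\pi$ automatically has $1$ as an eigenvalue on $\SSS^{(2,1^{n-2})}$, independent of the parity of $n$.

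Next I would handle even $\pi$. By Proposition \ref{same_even}, $\pi$ has $1$ as an eigenvalue on $\SSS^{(2,1^{n-2})}$ iff it does on $\SSS^{(n-1,1)}$, and by Proposition \ref{std_evalue} this happens iff $r\geq 2$, i.e.\ iff $\pi$ is not a single $n$-cycle. The only obstruction is therefore the class $\mathcal{C}_{(n)}$, whose eigenvalues on $\SSS^{(n-1,1)}$ are the non-trivial $n$-th roots of unity and hence avoid $1$. An $n$-cycle is even iff $n$ is odd, so this obstruction is relevant exactly when $n$ is odd; when $n$ is even, every even $\pi$ has $r\geq 2$ and thus contributes an eigenvalue $1$.

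Combining the two cases gives the equivalence: if $n$ is even, both odd and even $\pi$ produce an eigenvalue $1$ on $\SSS^{(2,1^{n-2})}$, so the module is unisingular; if $n$ is odd, the $n$-cycles form an even conjugacy class on which $1$ is not an eigenvalue, so unisingularity fails. No step here is truly delicate; the only thing requiring care is keeping the switch between eigenvalues $+1$ and $-1$ straight when passing through the sign twist, which the combination of Proposition \ref{same_even} and Lemma \ref{switch} handles cleanly.
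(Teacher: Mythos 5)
Your proof is correct and follows essentially the same route as the paper: both reduce everything to the eigenvalues of $\SSS^{(n-1,1)}$ via the sign twist $\SSS^{(2,1^{n-2})} = \SSS^{(n-1,1)}\otimes\sgn$, using Proposition \ref{std_evalue} together with Proposition \ref{same_even} and Lemma \ref{switch}, and both identify $\mathcal{C}_{(n)}$ for odd $n$ as the unique obstruction. The only (mild) difference is organizational: by casing on the parity of $\pi$ rather than on whether $\pi$ is an $n$-cycle, you absorb the paper's explicit factorization of $x^n-1$ for $n$-cycles with $n$ even into the general odd-permutation argument, since such an $n$-cycle is an odd permutation having $-1$ among its nontrivial $n$-th roots of unity.
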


\begin{proof}
By (\ref{sgn_decomp}) we have $\sgn \otimes \SSS^{(n-1,1)} = \SSS^{(2,1^{n-2})}$.  Thus, if $\pi \in S_n$ is an even permutation, the characteristic polynomials of $\pi$ on $\SSS^{(n-1,1)}$ and $\SSS^{(2,1^{n-2})}$ are identical.  

Let $n$ be an odd positive integer so that the $n$-cycles are even permutations.  By Proposition \ref{(n-1,1)}, the $n$-cycles on $\SSS^{(n-1,1)}$ (and therefore also on $\SSS^{(2,1^{n-2})}$, by Proposition \ref{same_even}) do not have 1 as an eigenvalue.  

Now suppose $n$ is an even positive integer. We break the remainder of the proof into two cases, treating the $n$-cycles separately.

\medskip

\noindent \textbf{\fbox{Case 1: $\pi$ is an $n$-cycle}} Recall that the characteristic polynomial of $\pi$ on $\mathcal{M}^{(n-1,1)}$ is $m_{(n)}(x) = x^n-1$.    By (\ref{tensor_evalue}), the eigenvalues of $\pi$ on $\mathcal{M}^{(n-1,1)} \otimes \sgn$ are the $-\zeta$, where $\zeta^n = 1$.  But since $n$ is even, if $m_{(n)}(\zeta) = 0$, then $m_{(n)}(-\zeta) = 0$ as well. Therefore, the characteristic polynomial of $\pi$ on $\mathcal{M}^{(n-1,1)} \otimes \sgn$ is also $x^n-1$, and admits the well-known factorization
\[
x^n -1 = (x+1)(x-1)\left(\sum_{j=0}^{n-1}x^j\right) \left(\sum_{j=0}^{n-1}(-x)^j \right).
\]
Since $\sgn$ acts nontrivially on $\pi$, and 
\begin{align} \label{long_decomp}
\mathcal{M}^{(n-1,1)} \otimes \sgn = \left(\SSS^{(n-1,1)} \oplus \mathbf{1} \right) \otimes \sgn  = \SSS^{(2,1^{n-2})} \oplus \sgn,
\end{align}
we have that the characteristic polynomial of $\pi$ on $\SSS^{(2,1^{n-2})}$ is 
\[
(x-1)\left(\sum_{j=0}^{n-1}x^j\right) \left(\sum_{j=0}^{n-1}(-x)^j \right),
\]
which vanishes at $x=1$.  

\medskip

\noindent \textbf{\fbox{Case 2: $\pi$ is not an $n$-cycle}} If $\pi$ is not an $n$-cycle, then there exists a partition $\mu = (\mu_1,\dots,\mu_r) \vdash n$ with $r>1$ such that $\pi \in \mathcal{C}_\mu$.  Write
\[
\pi = \sigma_1 \cdots \sigma_r
\]
where $\sigma_i$ is a $\mu_i$-cycle.  Write $m_\mu(x)$ for the characteristic polynomial of $\pi$ on $\mathcal{M}^{(n-1,1)}$ and $c_\mu(x)$ for the characteristic polynomial of $\pi$ on $\SSS^{(n-1,1)}$. By (\ref{char_poly_std}), $m_\mu(x)$ vanishes to order $r$ at $x=1$ and thus $c_\mu(x)$ vanishes to order $r-1$ at $x=1$, by (\ref{standard_decomp}).

If $\pi$ is an even permutation, then the characteristic polynomials of $\pi$ on $\mathcal{M}^{(n-1,1)} \otimes \sgn$ and $\SSS^{(2,1^{n-2})}$ agree with $m_\mu(x)$ and $c_\mu(x)$, respectively, by 
(\ref{standard_decomp}).  Since $r \geq 2$, it follows that $c_\mu(1)=0$, and hence $\pi$ has eigenvalue 1 on $\SSS^{(2,1^{n-2})}$.

If $\pi$ is an odd permutation, then we employ the identity (\ref{long_decomp}) again.  If we can show that $m_\mu(-1)=0$ (so that $\pi$ has eigenvalue $-1$ on $\mathcal{M}^{(n-1,1)}$), then $\pi$ will have eigenvalue 1 on $\mathcal{M}^{(n-1,1)} \otimes \sgn$, and therefore $\pi$ will have eigenvalue 1 on $\SSS^{(2,1^{n-2})}$.  But since $\pi$ is odd, at least one of the $\sigma_i$ is a cycle of even length $n_i$, and therefore $m_\mu(x)$ vanishes at $-1$. This completes the proof.
\end{proof}

We now consider the Specht modules $\SSS^{(n-k,1^k)}$ for $k=2,\dots,n-3$, the cases $k=1$ and $k=n-2$ having been completed in the two propositions above. We break this task into two parts:
\begin{itemize}
\item Case A: $2 \leq k \leq \lfloor \frac{n-1}{2} \rfloor$, and 
\item Case B: $\lceil \frac{n+1}{2} \rceil \leq k \leq n-3$, and
\end{itemize}
By doing this we are taking advantage of the fact that every Specht module in Case B is the conjugate of a Specht module in Case A. 

\begin{thm} \label{mainthm1}
Let $n \geq 7$ and fix $k \in \left[2, \lfloor\frac{n-1}{2} \rfloor \right]$.  Then $\SSS^{(n-k,1^k)}$ is unisingular.
\end{thm}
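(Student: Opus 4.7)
The plan is to show, for each $\pi \in S_n$ with cycle type $\mu = (\mu_1, \ldots, \mu_r) \vdash n$, that the eigenvalue multiset $\Lambda_\mu$ of Proposition~\ref{std_evalue} admits a $k$-element sub-multiset (with distinct indices) whose product equals $1$. By (\ref{wedge}) and (\ref{ext_evalue}) these products are precisely the eigenvalues of $\pi$ on $\SSS^{(n-k,1^k)} = \wedge^k \SSS^{(n-1,1)}$, so producing such a sub-multiset is exactly the claim that $1$ is an eigenvalue. Recall that $\Lambda_\mu$ consists of $r-1$ copies of the eigenvalue $1$ together with, for each cycle of length $\mu_i$, all non-trivial $\mu_i$-th roots of unity. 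The case $r \geq k + 1$ is immediate: pick $k$ of the $r - 1 \geq k$ copies of $1$.

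When $2 \leq r \leq k$ I build the sub-multiset from three types of product-$1$ ingredients: (i) an individual eigenvalue $1$ of size $1$, (ii) a conjugate pair $\{z_{\mu_i}^j, z_{\mu_i}^{-j}\}$ drawn from inside a single cycle of length $\mu_i \geq 3$ of size $2$, and (iii) the pair $\{-1, -1\}$ obtained from two distinct even-length cycles of size $2$. Setting $P = \sum_i \lfloor (\mu_i - 1)/2 \rfloor$ for the total number of type-(ii) pairs and $E$ for the number of even cycles, I want $a, b, s \geq 0$ with $a + 2b + 2s = k$, $a \leq r - 1$, $b \leq P$, $2s \leq E$, and $a \equiv k \pmod 2$. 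Choose $a \in \{0, 1\}$ of the right parity (possible because $r \geq 2$); the problem reduces to writing $\lfloor k/2 \rfloor = b + s$ with $b \leq P$ and $s \leq \lfloor E/2 \rfloor$. The identity $2P + E = n - r$ gives $P + \lfloor E/2 \rfloor = \lfloor (n - r)/2 \rfloor$, and the hypotheses $r \leq k \leq \lfloor (n-1)/2 \rfloor$ force $r + k \leq n - 1$, hence $\lfloor k/2 \rfloor \leq \lfloor (n-r)/2 \rfloor$, so such $b, s$ exist. The main calculation in this case is precisely this parity-feasibility check.

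The remaining case $r = 1$, where $\pi$ is an $n$-cycle and $\Lambda_\mu = \{z, z^2, \ldots, z^{n-1}\}$ with $z = e^{2\pi i/n}$, reduces to a combinatorial question: find $I \subseteq \{1, \ldots, n-1\}$ with $|I| = k$ and $\sum_{i \in I} i \equiv 0 \pmod n$. I propose the explicit family $I = \{1, 2, \ldots, k-2\} \cup \{x, y\}$ with $k - 1 \leq x < y \leq n - 1$ and $x + y \equiv -(k-2)(k-1)/2 \pmod n$ (treating $\{1, \ldots, k-2\}$ as empty when $k = 2$). As $(x, y)$ ranges over admissible pairs the sum $x + y$ attains every integer value in $[2k - 1, 2n - 3]$, an interval of length $2n - 2k - 1$. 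The crucial point is that this length is at least $n$ precisely when $k \leq \lfloor (n-1)/2 \rfloor$, so every residue modulo $n$ is represented and the required $x, y$ exist. The main obstacle I anticipate is this residue-covering step: it is exactly where the hypothesis $k \leq \lfloor (n-1)/2 \rfloor$ enters essentially, though once set up the verification is elementary.
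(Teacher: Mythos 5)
Your proof is correct. The overall framework --- reducing via (\ref{wedge}) and (\ref{ext_evalue}) to exhibiting $k$ entries of $\Lambda_\mu$ (in distinct positions) whose product is $1$, then casing on the number $r$ of cycles --- is the same as the paper's, and the case $r > k$ is handled identically. The execution of the two remaining cases is genuinely different. For $2 \leq r \leq k$ the paper splits further into $r = k$ (one conjugate pair plus $k-2$ copies of $1$) and $r \in [2,k-1]$, the latter handled by an averaging bound $\mu_1 + \cdots + \mu_p \geq pn/r$ showing that at least $n/2 - 1 \geq k$ product-one eigenvalues are available; your ingredient count ($1$'s, conjugate pairs inside a cycle, and $\{-1,-1\}$ pairs drawn from two distinct even cycles) combined with the exact identity $2P + E = n - r$ replaces this with the single feasibility check $\lfloor k/2\rfloor \leq \lfloor (n-r)/2\rfloor$, which follows from $r + k \leq n-1$. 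This is sharper and has the advantage of directly producing a sub-multiset of size exactly $k$, rather than a product-one collection of size at least $k$ that must then implicitly be trimmed using the pair structure. For $r = 1$ the paper writes down explicit conjugate-pair subsets of the nontrivial $n$-th roots of unity according to the parity of $k$, whereas you solve the subset-sum congruence $\sum_{i\in I} i \equiv 0 \pmod n$ with $I = \{1,\dots,k-2\}\cup\{x,y\}$ by a residue-covering argument on the interval $[2k-1,\,2n-3]$ of attainable values of $x+y$; both constructions work, and both make visible that $k \leq \lfloor (n-1)/2\rfloor$ is exactly the hypothesis needed in this case.
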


\begin{proof}
Let $\mu = (\mu_1,\dots,\mu_r) \vdash n$ and suppose $\pi \in \mathcal{C}_\mu \subseteq S_n$.  Write $\pi = \sigma_1 \cdots \sigma_r$, where the $\sigma_i$ are disjoint cycles of length $\mu_i$.  The characteristic polynomial  of $\pi$ on $\SSS^{(n-k,1^k)}$ is given by (\ref{ext_evalue}), with the individual $\alpha_i \in \Lambda_\mu$ from (\ref{Emu}).  Therefore, we must show that for every partition $\mu$ of $n$, we can choose $k$ elements of $\Lambda_\mu$ that multiply to 1.  The remainder of the proof is dedicated to this.

\medskip

\noindent \textbf{\fbox{Case 1: $r >k$}} Then $\pi$ has eigenvalue 1 of multiplicity $r-1\geq k$ on $\SSS^{(n-1,1)}$.  Therefore $\pi$ has eigenvalue 1 on $\SSS^{(n-k,1^k)}$ as well.  

\medskip 

\noindent \textbf{\fbox{Case 2: $r=1$}} Then the eigenvalues of $\pi$ on $\SSS^{(n-1,1)}$ are the nontrivial roots of $x^{n} - 1$.  Let $z \in \C$ be a primitive $n$-th root of unity.  If $k = 2\ell$ is even, then 
\[
\lbrace z,z^{n-1},z^2,z^{n-2},\dots,z^\ell, z^{n-\ell} \rbrace
\]
is a cardinality-$k$ subset of the roots of $x^n-1$, the product of whose elements is 1.  If $k=2\ell+1$ is odd, then 
\[
\lbrace z,z^{n-1},z^2,z^{n-2},\dots,z^{\ell-1}, z^{n-\ell +1}, z^\ell, z^{\ell + 1}, z^{n-2\ell -1} \rbrace
\]
is a cardinality-$k$ subset of the roots of $x^n-1$, the product of whose elements is 1.  (Note that since $\ell = \lfloor k/2 \rfloor$ and $k \leq \lfloor (n-1)/2 \rfloor$, the elements of the sets above are all distinct.)

\medskip

\noindent \textbf{\fbox{Case 3: $r=k$}} The average value of each $\mu_i$ is $n/r$ and, since the $\mu_i$ form a non-increasing sequence, it follows that $\mu_1 \geq n/r$.  In the specific case $r=k$, and since $k< n/2$, we must have  $\mu_1 \geq 3$.  Therefore, if $z$ is a primitive $\mu_1$-th root of unity, then $z$ and $z^{-1}$ are nonequal and each occurs as an eigenvalue of $\pi$ on $\SSS^{(n-1,1)}$.  It is also the case that $1$ is an eigenvalue of $\pi$ of multiplicity $r-1$ = $k-1$.  Therefore, $\pi$ has eigenvalue 1 on $\SSS^{(n-k,1^k)}$ since we can select $k$ eigenvalues on $\SSS^{(n-1,1)}$ as follows:
\[
z\cdot z^{-1} \cdot \underbrace{1 \cdots 1}_{k-2\text{ times}} = 1.
\]

\medskip

\noindent \textbf{\fbox{Case 4: $r \in [2,k-1]$}} Reasoning as in the previous case, we note that the average value of each $\mu_i$ is $n/r$, whence $\mu_1 + \cdots + \mu_p \geq pn/r$.  Each $\sigma_i$ contributes $\mu_i - 1$ nontrivial  $\mu_i$th roots of unity as eigenvalues on $\SSS^{(n-1,1)}$.  If $\mu_i$ is odd, then these roots of unity multiply to 1 and if $\mu_i$ is even, then they multiply to $-1$.  In the case where $\mu_i$ is even, the product of all but $-1$ multiplies to 1.  Recall also that 1 is an eigenvalue of $\pi$ of multiplicity $r-1$.  Now our task is to deduce that there is a cardinality-$k$ set of eigenvalues on $\SSS^{(n-1,1)}$ that multiplies to 1.   We now have two subcases to consider.

\medskip

\noindent \textbf{\fbox{Subcase 4a: $r$ is odd}}  In this case, take $p = (r+1)/2$.  Then $\sigma_1,\sigma_2,\dots,\sigma_p$ contribute at least 
\[
\sum_{i = 1}^{(r+1)/2} \mu_i - 2 \geq \frac{(r+1)}{2} \cdot \frac{n}{r} - 2 \cdot \frac{(r+1)}{2} = \frac{n}{2} + \frac{n}{2r} + r-1 > \frac{n}{2} + 1 - r- 1 = \frac{n}{2} -r
\]
eigenvalues, the product of which is 1 (the latter inequality follows since $r<k<n/2$).  Together with the eigenvalue 1 of multiplicity $r-1$, this is now a cardinality $n/2-1 \geq k$ set of eigenvalues, the product of which is 1.

\medskip

\noindent \textbf{\fbox{Subcase 4b: $r$ is even}} Now take $p = (r+2)/2$.  Proceeding as above, $\sigma_1,\dots,\sigma_p$ contribute at least 
\[
\sum_{i = 1}^{(r+2)/2} \mu_i - 2 \geq \frac{(r+2)}{2} \cdot \frac{n}{r} - 2 \cdot \frac{(r+2)}{2} >   \frac{n}{2} + 2 - r- 2 = \frac{n}{2} -r
\]
eigenvalues, the product of which is 1.  Again, together with the eigenvalue 1 of multiplicity $r-1$, this is now a cardinality $n/2-1 \geq k$ set of eigenvalues, the product of which is 1.
\end{proof}

Theorem \ref{mainthm1} shows that the Gamma-shaped Specht modules $\SSS^{(n-k,1^k)}$ for $k=2,\dots,\lfloor \frac{n-1}{2} \rfloor$ (``wide and short'') are unisingular.  It remains to check their conjugates (``narrow and tall'') for unisingularity.  Most of the difficult work of the proof of Theorem \ref{mainthm2} has already been done in the proof of Theorem \ref{mainthm1}. 

\begin{thm} \label{mainthm2}
Let $n \geq 7$ and fix $k \in \left[\lfloor\frac{n+1}{2}\rfloor,n-3 \right]$.  Then $\SSS^{(n-k,1^k)}$ is unisingular.
\end{thm}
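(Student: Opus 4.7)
The plan is to exploit the conjugate-partition identity $\SSS^\lambda = \SSS^{\lambda'} \otimes \sgn$ in order to reduce to the already-proved Theorem~\ref{mainthm1}. Writing $\lambda = (n-k,1^k)$, the conjugate is $\lambda' = (k+1,1^{n-k-1}) = (n-k',1^{k'})$ with $k' \ddef n-k-1$; a short case-check shows that for $k \in [\lceil(n+1)/2\rceil, n-3]$ the exponent $k'$ lies in $[2, \lfloor(n-1)/2\rfloor - 1]$, well inside the range of Theorem~\ref{mainthm1}. Thus $\SSS^{\lambda'}$ is already known to be unisingular, and the task reduces to propagating this information to $\SSS^\lambda$ for every $\pi \in S_n$.

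For even $\pi$ this is immediate: Proposition~\ref{same_even} gives that the characteristic polynomials of an even permutation on $\SSS^\lambda$ and $\SSS^{\lambda'}$ coincide, so $1$ is an eigenvalue of $\pi$ on the former precisely because it is on the latter. Odd permutations require separate treatment. For odd $\pi$, Lemma~\ref{switch} lets me replace the goal ``$1$ is an eigenvalue of $\pi$ on $\SSS^\lambda$'' by the equivalent ``$-1$ is an eigenvalue of $\pi$ on $\SSS^{\lambda'} = \wedge^{k'}\SSS^{(n-1,1)}$''; by (\ref{ext_evalue}) this amounts to producing a $k'$-element multi-subset of $\Lambda_\mu$ (for $\mu$ the cycle type of $\pi$) whose product equals $-1$.

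Since $\pi$ is odd, at least one part $\mu_j$ is even, so $z_{\mu_j}^{\mu_j/2} = -1 \in \Lambda_\mu$. The strategy is to pre-select this $-1$ and then draw the remaining $k'-1$ elements from a subset of $\Lambda_\mu$ of product $+1$, which is essentially the problem handled by Theorem~\ref{mainthm1} applied with $k'-1$ in place of $k$. The base case $k'=2$ I would dispatch by hand: if $r \geq 2$ the pair $\{1,-1\}$ works (since $1 \in \Lambda_\mu$ with multiplicity $r-1 \geq 1$), and if $r=1$ so that $\pi$ is an $n$-cycle with $n \geq 8$ even, the pair $\{z, z^{n/2-1}\}$ works because its product is $z^{n/2} = -1$.

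The main obstacle, and the piece requiring the most bookkeeping, is ensuring for $k' \geq 3$ that the $k'-1$ elements supplied by Theorem~\ref{mainthm1} can be chosen to avoid the pre-selected $-1$ index in $\Lambda_\mu$. A case-by-case inspection of the four cases in the proof of Theorem~\ref{mainthm1} settles this: Case~1 there uses only copies of $1$; Case~2 selects $n$-th roots $z^a$ whose exponents are bounded by $\lfloor(k'-1)/2\rfloor < n/2$, so none equals $z^{n/2}=-1$; Case~3 uses a primitive $\mu_1$-th root and its inverse with $\mu_1 \geq 3$, neither of which is $-1$; and Case~4 by design omits the $-1$-index of each even cycle it touches. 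In every instance at least one of the $s \geq 1$ indices contributing $-1$ to $\Lambda_\mu$ (where $s$ is the number of even parts of $\mu$) remains unused, so it may be adjoined to yield the desired $k'$-subset of product $-1$, thereby completing the proof.
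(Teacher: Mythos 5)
Your argument follows the same overall strategy as the paper: pass to the conjugate $\lambda' = (k+1,1^{n-k-1}) = (n-k',1^{k'})$ with $k' = n-k-1$, dispose of even permutations via Proposition~\ref{same_even}, and for odd $\pi$ use Lemma~\ref{switch} to reduce to showing $-1$ is an eigenvalue of $\pi$ on $\SSS^{\lambda'} = \wedge^{k'}\SSS^{(n-1,1)}$, i.e.\ to exhibiting a $k'$-element multi-subset of $\Lambda_\mu$ with product $-1$.

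Where you diverge is in how you manufacture this multi-subset, and here your write-up is actually more careful than the paper's. The paper's proof of Theorem~\ref{mainthm2} asserts that the $k'$ eigenvalues produced in the proof of Theorem~\ref{mainthm1} always contain a copy of $1$ (which it then swaps for $-1$). That assertion is not literally true: in Case~2 of Theorem~\ref{mainthm1} ($r=1$, so $\pi$ is an $n$-cycle, which is odd precisely when $n$ is even) the selected eigenvalues are nontrivial $n$-th roots of unity only, so $1$ never appears; the same failure occurs in Case~3 when $k'=2$. Your device --- pre-select a copy of $-1$ from an even part $\mu_j$, then ask for $k'-1$ further elements of product $1$ via the Case~1--4 machinery of Theorem~\ref{mainthm1}, checking case by case that those $k'-1$ elements avoid the chosen $-1$ slot, and handling $k'=2$ by hand --- closes exactly this gap. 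Two small points: your statement that the Case~2 exponents are ``bounded by $\lfloor(k'-1)/2\rfloor$'' is imprecise (some exponents are near $n$); what matters, and what you correctly use, is that they lie in $\{1,\dots,\ell\}\cup\{n-\ell,\dots,n-1\}$ (plus possibly $n-2\ell-1$) with $\ell < n/4$, hence never equal $n/2$. Also, with the theorem's stated lower bound $k \geq \lfloor(n+1)/2\rfloor$, for even $n$ one has $k'$ up to $\lfloor(n-1)/2\rfloor$ rather than $\lfloor(n-1)/2\rfloor-1$; this is still within the range of Theorem~\ref{mainthm1}, so nothing breaks, but your interval for $k'$ is slightly narrower than it needs to be.
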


\begin{proof}
For ease of notation, we set $\lambda = (n-k,1^k)$ and $\lambda' = (k+1,1^{n-k-1})$ for the remainder of this proof; we have $\SSS^\lambda = \SSS^{\lambda'} \otimes \sgn$. By Theorem \ref{mainthm1}, $\SSS^{\lambda'}$ is unisingular and by Proposition \ref{same_even} it suffices to consider odd permutations $\pi$. Finally, by Lemma \ref{switch} it suffices to prove $-1$ is an eigenvalue of $\pi$ on $\SSS^{\lambda'}$.

Suppose $\pi$ is an odd permutation and write $\pi = \sigma_1 \cdots \sigma_r$, corresponding to the partition $\mu = (\mu_1,\dots,\mu_r) \vdash n$.  Since $\pi$ is odd, an odd number of the $\mu_i$ are even.   
In the proof of Theorem \ref{mainthm1} we demonstrated that on $\SSS^{(n-1,1)}$, for odd $\pi$, there are always $k$ eigenvalues, at least one of which is 1, that multiply to $1$; denote these $k$ eigenvalues by $z_1,\dots,z_k$, and without loss of generality we may assume $z_1=1$. Since at least one of the $\mu_i$ is even, $-1$ is an eigenvalue of $\pi$ on $\SSS^{(n-1,1)}$.  Replace $z_1$ with $-1$.  Then $-\prod_{j=2}^k z_j = -1$ is an eigenvalue of $\pi$ on $\SSS^{\lambda'}$ and hence 1 is an eigenvalue of $\pi$ on $\SSS^\lambda$.
\end{proof}

Altogether, Proposition \ref{(n-1,1)} and Theorems \ref{(2,1^{n-2})}, \ref{mainthm1}, and \ref{mainthm2} constitute a proof of Theorem \ref{mainthm}.

\section{The module $\SSS^{(2^2,1^{n-4})}$} \label{penultimate} 

In this section we prove Theorem \ref{(2^2,1^{n-4})}.  Our approach is to use as much general linear algebra as possible to reduce the problem to calculations that are specific to symmetric group theory.  Once we have done this, we will give a very brief description of the permutation module theory that we require to complete our work.  

\subsection{Generalities}  Recall from \cite{cullinan} that $\SSS^{(n-2,2)}$ is always unisingular once $n \geq 4$.  This allows us to quickly prove that all even permutations have 1 as an eigenvalue on $\SSS^{(2^2,1^{n-4})}$.

\begin{lem} \label{basic_lem}
Let $\pi \in S_n$ be an even permutation.  Then $\pi$ has 1 as an eigenvalue on $\SSS^{(2^2,1^{n-4})}$. 
\end{lem}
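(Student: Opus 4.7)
The plan is short because the hypothesis does essentially all the work via conjugation and the sign twist. First I would compute the conjugate partition: reflecting the Young diagram of $(2^2, 1^{n-4})$ across the main diagonal yields $(n-2, 2)$, so $(2^2,1^{n-4})' = (n-2, 2)$. By the identity (\ref{sgn_decomp}), this gives
\[
\SSS^{(2^2, 1^{n-4})} = \SSS^{(n-2,2)} \otimes \sgn.
\]

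Now I would apply Proposition \ref{same_even} with $\lambda = (n-2, 2)$. Since $\pi$ is an even permutation, $\sgn(\pi) = 1$, and the proposition tells us that the characteristic polynomials of $\pi$ on $\SSS^{(n-2, 2)}$ and $\SSS^{(2^2, 1^{n-4})}$ are identical. In particular, $\pi$ has $1$ as an eigenvalue on one module if and only if it does on the other.

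Finally, I would invoke the result from \cite{cullinan} recalled at the start of the subsection, namely that $\SSS^{(n-2, 2)}$ is unisingular for every $n \geq 4$. This immediately produces the eigenvalue $1$ for $\pi$ on $\SSS^{(n-2, 2)}$, and transferring along the previous step yields the eigenvalue $1$ on $\SSS^{(2^2, 1^{n-4})}$, completing the argument.

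There is no genuine obstacle here; the work was already done in \cite{cullinan} for $\SSS^{(n-2,2)}$, and the even-permutation hypothesis is exactly what is needed to make the sign twist invisible at the level of eigenvalues. The real content of Theorem \ref{(2^2,1^{n-4})} will be in the subsequent analysis of \emph{odd} permutations, where $\sgn$ negates eigenvalues and one must actually exhibit a $-1$-eigenvector on $\SSS^{(n-2,2)}$ (equivalently, via Lemma \ref{switch}) using the detailed permutation-module theory alluded to in the section's opening.
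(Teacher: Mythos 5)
Your proposal is correct and matches the paper's own proof essentially verbatim: both identify $(2^2,1^{n-4})' = (n-2,2)$, invoke the unisingularity of $\SSS^{(n-2,2)}$ from \cite{cullinan}, and transfer the eigenvalue via Proposition \ref{same_even}. Nothing to add.
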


\begin{proof}
Since $\SSS^{(2^2,1^{n-4})} = \SSS^{(n-2,2)'}$, and $\SSS^{(n-2,2)}$ is unisingular, $\pi$ has eigenvalue 1 on $\SSS^{(2^2,1^{n-4})}$ by Proposition \ref{same_even}.
\end{proof}

We have now reduced the problem of unisingularity of $\SSS^{(2^2,1^{n-4})}$ to the  problem of determining that odd permutations have $-1$ as an eigenvalue on $\SSS^{(n-2,2)}$.  

In general, the Specht modules have a natural basis of \textsf{polytabloids} \cite[Def.~2.3.2]{sagan}, while the permutation modules have a natural basis of \textsf{tabloids} \cite[Def.~2.1.4]{sagan}.  From a computational point of view, the tabloids are a much simpler object to work with.  We can take advantage of the decomposition (\ref{n-2specht}) in order to work explicitly with the tabloids.

Let us fix some notation for the final part of this subsection.  Fix a conjugacy class $\mathcal{C}_\mu$ of odd permutations of $S_n$ and let $\pi \in \mathcal{C}_\mu$.   Write $\mu = (\mu_1, \dots, \mu_r)$.   Define three characteristic polynomials as follows:
\begin{align*}
M_\mu(x) &:\text{ characteristic polynomial of $\pi$ on $\mathcal{M}^{(n-2,2)}$} \\
c_\mu(x) &:\text{ characteristic polynomial of $\pi$ on $\mathcal{S}^{(n-2,2)}$} \\
m_\mu(x) &:\text{ characteristic polynomial of $\pi$ on $\mathcal{M}^{(n-1,1)}$}.
\end{align*}
Our goal is to determine conditions under which $c_\mu(-1) = 0$.

\begin{defn}
Let $\mu = (\mu_1,\dots,\mu_r) \vdash n$.  Define $\mathsf{E}(\mu)$ to be the number of even $\mu_i$ comprising $\mu$.    
\end{defn}

\begin{lem} \label{std_mult}
With all notation as above, the multiplicity of $-1$ as a root of $m_\mu(x)$ is $\mathsf{E}(\mu)$.
\end{lem}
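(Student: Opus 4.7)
The plan is to read the multiplicity directly off the explicit factorization of $m_\mu(x)$. By (\ref{char_poly_std}) we have
\[
m_\mu(x) = \prod_{j=1}^r (x^{\mu_j} - 1),
\]
so the multiplicity of $-1$ as a root of $m_\mu(x)$ is the sum, over $j=1,\dots,r$, of the multiplicity of $-1$ as a root of the factor $x^{\mu_j}-1$.

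Each factor $x^{\mu_j}-1$ is separable since its roots are the distinct $\mu_j$-th roots of unity. Hence $-1$ appears as a root of $x^{\mu_j}-1$ with multiplicity either $0$ or $1$, and it appears with multiplicity $1$ precisely when $(-1)^{\mu_j} = 1$, i.e., when $\mu_j$ is even. Summing the contributions over $j$, the total multiplicity of $-1$ in $m_\mu(x)$ equals the number of indices $j$ with $\mu_j$ even, which is exactly $\mathsf{E}(\mu)$ by definition.

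There is no real obstacle here: the statement is essentially a repackaging of the factorization (\ref{char_poly_std}) together with the elementary fact that $-1$ is a $\mu_j$-th root of unity iff $\mu_j$ is even. The substance of the computation is absorbed into (\ref{char_poly_std}), which has already been established.
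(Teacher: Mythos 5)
Your proof is correct and follows the same route as the paper: read the multiplicity off the factorization $m_\mu(x) = \prod_j (x^{\mu_j}-1)$ from (\ref{char_poly_std}) and note that each factor has $-1$ as a simple root precisely when $\mu_j$ is even. Nothing is missing.
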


\begin{proof}
Since $\mathcal{C}_\mu$ consists of odd permutations, it must be the case that an odd number of the $\mu_i$ are even integers.  By (\ref{char_poly_std}), we have 
\[
m_\mu(x) = \prod_{i=1}^r x^{\mu_i} -1,
\]
and each $x^{\mu_i} -1$ vanishes at $-1$ (to order 1) if and only if $\mu_i$ is even.  Hence the multiplicity of $-1$ as a root of $m_\mu(x)$ is the number of even $\mu_i$ comprising $\mu$, as claimed.
\end{proof}

\begin{prop} \label{reduction}
With all notation as above, $c_\mu(-1)=0$ if and only if the multiplicity of $-1$ as a root of $M_\mu(x)$ is $> \mathsf{E}(\mu)$.
\end{prop}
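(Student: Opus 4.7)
The plan is to exploit the decomposition (\ref{n-2specht}), which gives $\mathcal{M}^{(n-2,2)} = \SSS^{(n-2,2)} \oplus \mathcal{M}^{(n-1,1)}$ as an $S_n$-module, and to translate this into a multiplicative relation among the three characteristic polynomials. Concretely, since characteristic polynomials of block-diagonal linear transformations multiply, the decomposition yields
\[
M_\mu(x) = c_\mu(x)\, m_\mu(x).
\]
Hence the multiplicity of any fixed value as a root of $M_\mu$ equals the sum of its multiplicities as a root of $c_\mu$ and $m_\mu$.

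Next I would apply Lemma \ref{std_mult}, which pins down the multiplicity of $-1$ as a root of $m_\mu(x)$ exactly as $\mathsf{E}(\mu)$. Substituting into the identity above gives
\[
\operatorname{mult}_{-1}(M_\mu) \;=\; \operatorname{mult}_{-1}(c_\mu) + \mathsf{E}(\mu).
\]
From here the equivalence is immediate: $c_\mu(-1) = 0$ is the same as $\operatorname{mult}_{-1}(c_\mu) \geq 1$, which by the displayed identity is equivalent to $\operatorname{mult}_{-1}(M_\mu) \geq \mathsf{E}(\mu) + 1$, i.e.\ $\operatorname{mult}_{-1}(M_\mu) > \mathsf{E}(\mu)$.

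There is no real obstacle here; the content of the proposition is purely bookkeeping, and all the work has already been done in deriving (\ref{n-2specht}) and Lemma \ref{std_mult}. The usefulness of the reformulation is that $M_\mu(x)$, unlike $c_\mu(x)$, can be computed directly in the tabloid basis of the permutation module $\mathcal{M}^{(n-2,2)}$, which is the reason this reduction will be valuable in the subsequent subsection where the tabloid machinery is introduced.
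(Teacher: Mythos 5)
Your proof is correct and follows exactly the paper's argument: the decomposition (\ref{n-2specht}) gives $M_\mu(x) = c_\mu(x)\,m_\mu(x)$, and Lemma \ref{std_mult} supplies $\operatorname{mult}_{-1}(m_\mu) = \mathsf{E}(\mu)$, from which the equivalence is immediate bookkeeping. Nothing to add.
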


\begin{proof}
By (\ref{n-2specht}), we have $M_\mu(x) = c_\mu(x)m_\mu(x)$.  Now apply Lemma \ref{std_mult}.
\end{proof}

We now give a brief, self-contained description of the tabloids which are the natural basis of $\mathcal{M}^{(n-2,2)}$.

\subsection{Tabloids} We follow the treatment of \cite[Ch.~2]{sagan}.  Let $n$ be a positive integer and let $\lambda  = (\lambda_1,\dots,\lambda_r) \vdash n$.  Two tableaux $t$ and $s$ of shape $\lambda$ are said to be \textsf{row-equivalent} if the corresponding rows contain the same elements.  Row equivalence is an equivalence relation and the equivalence classes are called $\lambda$-\textsf{tabloids}.  If $t$ is a $\lambda$-tableau, then we write $\mathbf{\lbrace t \rbrace}$ for the equivalence class (tabloid) containing $t$. 

We write $\lambda! = \prod \lambda_i!$.  The number of distinct $\lambda$-tabloids is $n!/\lambda!$.  If $\mathbf{\lbrace t_1 \rbrace},\dots,\mathbf{\lbrace t_k \rbrace}$ is a complete set of $\lambda$-tabloids, then the permutation module $\mathcal{M}^\lambda$ is defined as the $S_n$-module spanned by the $\mathbf{\lbrace t_i \rbrace}$:
\[
\mathcal{M}^\lambda = \C[\mathbf{\lbrace t_1 \rbrace},\dots,\mathbf{\lbrace t_k \rbrace}].
\]
We are primarily interested in doing explicit basis calculations when $\lambda  = (n-2,2)$, so we focus on this case only.  A quick calculation reveals that
\begin{align} \label{dim}
\dim \mathcal{M}^{(n-2,2)} = \frac{n!}{(n-2,2)!} = \binom{n}{2}.
\end{align}
If $t$ is a $\lambda$-tableau, then it is of the form
\[
t = 
\begin{tabular}{ccccccc}
$a$ & $b$ & $c$ & $*$ & $*$ & $*$ & $d$ \\
$i$ & $j$
\end{tabular}
\]
where the numbers filling the tableau are distinct and lie between $1$ and $n$.  Since  tabloids  are uniquely characterized by their row entries, we can denote $\mathbf{\lbrace t \rbrace}$ succinctly as
\[
\mathbf{\lbrace t \rbrace} =
\begin{tabular}{cc}
\hline
$i$ & $j$ \\
\hline
\end{tabular}.
\]
Note that $\overline{\underline{i\ \ j}} = \overline{\underline{j\ \ i}}$ due to row-equivalence, so we adopt the convention of writing $\overline{\underline{i\ \ j}}$ if and only if $i<j$.  The symmetric group $S_n$ acts on the tabloids element-wise.  We now give an illustrative example before returning to the proof of Theorem \ref{(2^2,1^{n-4})}.

\begin{exm}
Let $n=7$ and $\lambda = (5,2)$.  Then $\dim \mathcal{M}^{(5,2)} = 21$ with basis
\begin{center}
$\lbrace$
\begin{tabular}{cc}
\hline
$1$ & $2$ \\
\hline
\end{tabular}\ , \
\begin{tabular}{cc}
\hline
$1$ & $3$ \\
\hline
\end{tabular}\ , \ \dots \ , \
\begin{tabular}{cc}
\hline
$6$ & $7$ \\
\hline
\end{tabular} $\rbrace$.
\end{center}
Let $\mu = (7)$ and let $\pi = (1234567) \in \mathcal{C}_{(7)}$.  Then $\pi \left(\overline{\underline{1\ \ 2}}\right) = \overline{\underline{1\ \ 7}}$.  Continuing, one computes the orbits under the cyclic group $\langle \pi \rangle$ generated by $\pi$:
\begin{align*}
{\rm Orbit}_{\langle \pi \rangle} \left(\overline{\underline{1\ \ 2}} \right) &= \lbrace 
\overline{\underline{1\ \ 2}}\ ,\ 
\overline{\underline{1\ \ 7}}\ ,\ 
\overline{\underline{6\ \ 7}}\ ,\ 
\overline{\underline{5\ \ 6}}\ ,\ 
\overline{\underline{4\ \ 5}}\ ,\ 
\overline{\underline{3\ \ 4}}\ ,\ 
\overline{\underline{2\ \ 3}}\rbrace \\
{\rm Orbit}_{\langle \pi \rangle} \left(\overline{\underline{1\ \ 3}} \right) &= \lbrace 
\overline{\underline{1\ \ 3}}\ ,\ 
\overline{\underline{2\ \ 7}}\ ,\ 
\overline{\underline{1\ \ 6}}\ ,\ 
\overline{\underline{5\ \ 7}}\ ,\ 
\overline{\underline{4\ \ 6}}\ ,\ 
\overline{\underline{3\ \ 5}}\ ,\ 
\overline{\underline{2\ \ 4}}\rbrace \\
{\rm Orbit}_{\langle \pi \rangle} \left(\overline{\underline{1\ \ 4}} \right) &= \lbrace 
\overline{\underline{1\ \ 4}}\ ,\ 
\overline{\underline{3\ \ 7}}\ ,\ 
\overline{\underline{2\ \ 6}}\ ,\ 
\overline{\underline{1\ \ 5}}\ ,\ 
\overline{\underline{4\ \ 7}}\ ,\ 
\overline{\underline{3\ \ 6}}\ ,\ 
\overline{\underline{2\ \ 5}}\rbrace.
\end{align*}
Therefore, the characteristic polynomial of $\pi$ on $\mathcal{M}^{(5,2)}$ is 
\[
M_{(7)}(x) = (x^7-1)^{\binom{7}{2}/7} = (x^7-1)^3.
\]  
We will use similar reasoning to compute characteristic polynomials in the next section.
\end{exm}

\subsection{Proof of Theorem \ref{(2^2,1^{n-4})}} Now we return to the proof of Theorem \ref{(2^2,1^{n-4})}, which we divide into several steps.  For the remainder of this section, we exclusively consider $\mu \vdash n$ such that $\mathcal{C}_\mu$ consists of odd permutations $\pi$.   The following lemma will imply certain $\SSS^{(2^2,1^{n-4})}$ are not unisingular.

\begin{lem} \label{big_mult}
Let $n$ be an odd positive integer and let $\pi \in \mathcal{C}_{(n-2,2)}$.  Then the characteristic polynomial of $\pi$ on $\mathcal{M}^{(n-2,2)}$ is given by
\[
M_{(n-2,2)}(x) = (x^{n-2}-1)^{(n-3)/2}(x^{2n-4} -1)(x-1).
\]
\end{lem}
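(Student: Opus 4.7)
The plan is to realize $\mathcal{M}^{(n-2,2)}$ as the permutation module on $(n-2,2)$-tabloids and simply count the orbits of $\pi$ on the natural basis, since the characteristic polynomial of any permutation matrix is $\prod_{\mathcal{O}} (x^{|\mathcal{O}|}-1)$, with the product taken over orbits. Because an $(n-2,2)$-tabloid is determined by its second row, the basis of $\mathcal{M}^{(n-2,2)}$ is in bijection with the $\binom{n}{2}$ two-element subsets of $\{1,\ldots,n\}$, and $\pi$ acts on subsets by its natural action.

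After fixing representative cycles, say $\sigma_1=(1\,2\,\cdots\,n-2)$ and $\sigma_2=(n-1\,\,n)$, I would split the two-element subsets $\{a,b\}$ into three types according to how they meet the support of $\sigma_2$: (I) both $a,b\in\{1,\ldots,n-2\}$; (II) $\{a,b\}=\{n-1,n\}$; (III) exactly one of $a,b$ lies in $\{n-1,n\}$. On Type I the factor $\sigma_2$ acts trivially, so $\pi$ acts as $\sigma_1$ on two-subsets of $\{1,\ldots,n-2\}$; since $n-2$ is odd, a routine check shows all such orbits have size $n-2$, giving exactly $\binom{n-2}{2}/(n-2)=(n-3)/2$ orbits and a factor $(x^{n-2}-1)^{(n-3)/2}$. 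Type II yields the single fixed point $\{n-1,n\}$, contributing $x-1$.

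The one step requiring a small argument, and which I expect to be the main obstacle, is Type III. Here $\pi$ maps $\{i,n-1\}\mapsto\{i+1,n\}$ and $\{i,n\}\mapsto\{i+1,n-1\}$, with $i+1$ taken modulo $n-2$, so the label in $\{n-1,n\}$ flips each time $i$ advances by one. Returning to the starting subset thus requires $i$ to advance by a multiple of $n-2$ while the flip-count is even, i.e.\ by an even multiple of $n-2$; because $n-2$ is odd, the smallest such step is $2(n-2)$. All $2(n-2)$ subsets of Type III lie in this single orbit, contributing $x^{2n-4}-1$. (If $n-2$ were even this orbit would split into two orbits of length $n-2$, which is why the hypothesis on the parity of $n$ is essential.)

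Multiplying the three contributions gives
\[
M_{(n-2,2)}(x)=(x^{n-2}-1)^{(n-3)/2}(x^{2n-4}-1)(x-1),
\]
and a degree check $(n-2)\cdot\tfrac{n-3}{2}+(2n-4)+1=\binom{n}{2}$ confirms we have accounted for every basis vector, completing the proof.
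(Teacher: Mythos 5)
Your proposal is correct and follows essentially the same route as the paper: identify tabloids with $2$-element subsets, split them by how they meet the support of the transposition, count orbits of each type, and finish with a degree check. The only difference is that you spell out why the mixed-type subsets form a single orbit of length $2(n-2)$ (the parity/flip argument), a point the paper simply asserts.
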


\begin{proof}
We prove this by direct computation.  For notational convenience, we will suppose $\pi = (12\cdots(n-2))((n-1)n)$; this choice will not affect the characteristic polynomial.

There are exactly $\binom{n-2}{2}$ tabloids $\overline{\underline{i \ \ j}}$ with $1 \leq i < j \leq n-2$.  The action of $\langle \pi \rangle$ on this set decomposes into orbits of length $n-2$.  Since $n-2$ is odd, there are exactly $\binom{n-2}{2}/(n-2) = (n-3)/2$ orbits.  Therefore,
\[
(x^{n-2}-1)^{(n-3)/2} \mid M_{(n-2,2)}(x).
\]
We also observe that the tabloid $\overline{\underline{(n-1)\ \ n}}$ is fixed by $\pi$.  Thus $(x-1) \mid M_{(n-2,2)}(x)$ as well.  

It remains to evaluate the action of $\pi$ on the remaining $2n-4$ tabloids 
\[
\left\{ \overline{\underline{i \ \ j}}  ~\mid~ 1 \leq i \leq n-2, \ \ n-1 \leq j \leq n \right\}.
\]
These tabloids form a single orbit under the action of $\pi$, hence $(x^{2n-4} -1) \mid M_{(n-2,2)}(x)$.  Since the degree of $M_{(n-2,2)}(x) = \dim \mathcal{M}^{(n-2,2)} = \binom{n}{2} = (n-2) \cdot (n-3)/2 +1 +  (2n-4) $, the proof is complete.
\end{proof}

\begin{cor} \label{nonunicor}
If $n$ is odd then $\SSS^{(2^2,1^{n-4})}$ is not unisingular.  
\end{cor}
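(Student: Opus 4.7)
The plan is to exhibit one odd conjugacy class $\mathcal{C}_\mu \subseteq S_n$ on which the characteristic polynomial $c_\mu(x)$ of $\pi \in \mathcal{C}_\mu$ acting on $\SSS^{(n-2,2)}$ does not vanish at $x=-1$; by Lemma \ref{switch} applied with $\lambda'=(n-2,2)$, $\lambda=(2^2,1^{n-4})$, this immediately shows that $\pi$ fails to have $1$ as an eigenvalue on $\SSS^{(2^2,1^{n-4})}$, proving non-unisingularity. The natural candidate, exploiting the computation already performed, is $\mu = (n-2,2)$, whose characteristic polynomial on $\mathcal{M}^{(n-2,2)}$ is given explicitly by Lemma \ref{big_mult}.

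First I would check that $\mathcal{C}_{(n-2,2)}$ consists of odd permutations when $n$ is odd: a cycle of length $\mu_i$ has sign $(-1)^{\mu_i-1}$, so $\sgn(\pi) = (-1)^{(n-3)+1} = (-1)^{n-2} = -1$. Next I would compute $\mathsf{E}(\mu)$ for $\mu = (n-2,2)$: since $n-2$ is odd and $2$ is even, $\mathsf{E}(\mu) = 1$.

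Then the main step is to read off the multiplicity of $-1$ as a root of
\[
M_{(n-2,2)}(x) = (x^{n-2}-1)^{(n-3)/2}(x^{2n-4}-1)(x-1).
\]
Since $n-2$ is odd, $(-1)^{n-2}-1 = -2 \neq 0$, so the factor $(x^{n-2}-1)^{(n-3)/2}$ contributes nothing; the factor $(x-1)$ contributes nothing; and $x^{2n-4}-1$ has $-1$ as a simple root (its derivative at $-1$ is $-(2n-4) \neq 0$). Hence the multiplicity of $-1$ in $M_{(n-2,2)}(x)$ is exactly $1$, which equals $\mathsf{E}(\mu)$.

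Finally I would invoke Proposition \ref{reduction}: equality (rather than strict inequality) of these two multiplicities forces $c_\mu(-1) \neq 0$, so $\pi$ does not have $-1$ as an eigenvalue on $\SSS^{(n-2,2)}$, and Lemma \ref{switch} then shows $\pi$ does not have $1$ as an eigenvalue on $\SSS^{(2^2,1^{n-4})}$. There is essentially no obstacle: all the genuine combinatorial input has been absorbed into Lemma \ref{big_mult}, and the proof reduces to the parity observations above. The only thing to double-check is the size constraint $n \geq 5$ (so that $\lambda = (2^2,1^{n-4})$ makes sense and $\mathcal{C}_{(n-2,2)}$ is a bona fide conjugacy class with $n-2 \geq 3$), which is already assumed.
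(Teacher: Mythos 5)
Your proposal is correct and follows the paper's proof essentially verbatim: same offending class $\mathcal{C}_{(n-2,2)}$, same reading of the multiplicity of $-1$ from Lemma \ref{big_mult}, same appeal to Proposition \ref{reduction} and then Lemma \ref{switch}. The extra checks you flag (that $\mathcal{C}_{(n-2,2)}$ is odd, that $-1$ is a simple root of $x^{2n-4}-1$) are reasonable to spell out but are implicit in the paper's version.
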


\begin{proof}
Let $\pi \in \mathcal{C}_{(n-2,2)}$. By (\ref{char_poly_std}), $m_{(n-2,2)}(x) = (x^{n-2}-1)(x^2-1)$, 
which has $-1$ as a root with multiplicity $\mathsf{E}((n-2,2)) = 1$.  By Lemma \ref{big_mult}, $M_{(n-2,2)}(x)$ has $-1$ as a root with multiplicity 1 as well.  By Proposition \ref{reduction}, $c_{(n-2,2)}(-1) \ne 0$ . Thus, by Lemma \ref{switch}, $\pi$ does not have 1 as an eigenvalue on $\SSS^{(2^2,1^{n-4})}$, which completes the proof.
\end{proof}

\begin{prop} \label{char_div}
Let $n$ be a positive integer and $\mu = (\mu_1,\dots,\mu_r) \vdash n$.  Let $\pi \in \mathcal{C}_\mu$ and let $M_\mu(x)$ be the characteristic polynomial of $\pi$ on $\mathcal{M}^{(n-2,2)}$.  Fix a part $\mu_k$ of $\mu$. 
\begin{enumerate}
\item If $\mu_k \geq 2$ is even, then $\left(x^{\mu_k}-1\right)^{\mu_k/2 - 1}\left(x^{\mu_k/2} -1\right) \mid M_\mu(x)$, and
\item if $\mu_k \geq 3$ is odd, then $\left(x^{\mu_k} - 1\right)^{(\mu_k-1)/2} \mid M_\mu(x)$.
\end{enumerate}
\end{prop}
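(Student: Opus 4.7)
The plan is to exploit the fact that $\mathcal{M}^{(n-2,2)}$ has a basis of two-element tabloids $\overline{\underline{i\ \ j}}$ permuted by $\pi$, and then to isolate a $\pi$-invariant subspace coming from the cycle $\sigma_k$ of length $\mu_k$ alone.

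After conjugating if necessary (which does not change the characteristic polynomial), I may assume that $\sigma_k = (1\ 2\ \cdots\ \mu_k)$ acts on the set $\{1,2,\dots,\mu_k\}$ while the other cycles $\sigma_l$, $l\ne k$, fix this set pointwise. Let $V_k \subseteq \mathcal{M}^{(n-2,2)}$ be the subspace spanned by the tabloids $\overline{\underline{i\ \ j}}$ with $i,j \in \{1,\dots,\mu_k\}$. Since none of the entries $1,\dots,\mu_k$ are moved by any $\sigma_l$ with $l \ne k$, these tabloids are each fixed by all $\sigma_l$ for $l \ne k$, so $V_k$ is $\pi$-invariant and the action of $\pi$ on $V_k$ coincides with that of $\sigma_k$.

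Next I would analyze the orbits of $\langle \sigma_k \rangle$ on $V_k$, which amounts to counting orbits of the cyclic group $\Z/\mu_k\Z$ acting by rotation on the two-element subsets of $\{1,\dots,\mu_k\}$. Parametrizing each such pair $\{i,j\}$ by its chord length $d \ddef \min(|i-j|,\mu_k-|i-j|) \in \{1,\dots,\lfloor \mu_k/2\rfloor\}$, the orbit of $\{i,j\}$ depends only on $d$. A routine stabilizer computation shows that this orbit has size $\mu_k$ when $d<\mu_k/2$, and size $\mu_k/2$ precisely when $\mu_k$ is even and $d=\mu_k/2$ (the ``diameter'' pairs $\{i,i+\mu_k/2\}$). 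Each orbit of length $s$ contributes a factor of $x^s-1$ to the characteristic polynomial of the permutation action.

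Assembling these contributions gives the characteristic polynomial of $\pi\big|_{V_k}$ exactly: when $\mu_k$ is odd, there are $(\mu_k-1)/2$ orbits all of size $\mu_k$, producing $(x^{\mu_k}-1)^{(\mu_k-1)/2}$; when $\mu_k$ is even, there are $\mu_k/2 - 1$ orbits of size $\mu_k$ plus one orbit of size $\mu_k/2$, producing $(x^{\mu_k}-1)^{\mu_k/2-1}(x^{\mu_k/2}-1)$. In either case the total dimension $\binom{\mu_k}{2}$ is correctly accounted for. Since the characteristic polynomial of an invariant subspace divides the characteristic polynomial of the whole module, the divisibility claims (1) and (2) follow. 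The main (mild) obstacle is the orbit-size bookkeeping for the ``diameter'' pairs in the even case, which is the only source of the anomalous factor $x^{\mu_k/2}-1$; everything else is a clean index computation.
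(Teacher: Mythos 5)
Your proposal is correct and follows essentially the same route as the paper: restrict to the $\pi$-invariant subspace spanned by tabloids supported on the $\mu_k$-cycle, count $\langle\sigma_k\rangle$-orbits (the paper lists them explicitly as orbits of $\overline{\underline{1\ \ 2}},\overline{\underline{1\ \ 3}},\dots$, which is exactly your chord-length parametrization), and conclude by a dimension count that the resulting product is the full characteristic polynomial on that subspace, hence divides $M_\mu(x)$.
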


\begin{proof}
Write $\pi = \sigma_1 \cdots \sigma_r$ as a product of disjoint cycles according to the partition $\mu$. Fix a cycle $\sigma_k$ of length $\mu_k$ and suppose, for notational convenience, that $\sigma_k = (12\cdots\mu_k)$.  It is clear that 
\[
V_{\sigma_k} \ddef \C \left[ \left\{ \overline{\underline{i\ \ j}}\right\}_{1 \leq i < j \leq \mu_k}   \right]
\]
is a $\binom{\mu_k}{2}$-dimensional subspace of $\mathcal{M}^{(n-2,2)}$ that is stable under the cyclic group generated by $\pi$.  Now we count orbits, and treat the cases of even-vs-odd $\mu_k$ separately.

\medskip 

\noindent \textbf{\fbox{Case 1: $\mu_k \geq 2$ is even}} One checks that the following are $\mu_k/2-1$ distinct orbits, each of length $\mu_k$:
\begin{align*}
{\rm Orbit}_{\langle \pi \rangle} \left(\overline{\underline{1\ \ 2}} \right) &= \lbrace 
\overline{\underline{1\ \ 2}}\ ,\ 
\overline{\underline{1\ \ \mu_k}}\ , \dots
, \overline{\underline{2\ \ 3}}\rbrace \\
{\rm Orbit}_{\langle \pi \rangle} \left(\overline{\underline{1\ \ 3}} \right) &= \lbrace 
\overline{\underline{1\ \ 3}}\ ,\ 
\overline{\underline{2\ \ \mu_k}}\ ,\dots,  
\overline{\underline{2\ \ 4}}\rbrace \\
{\rm Orbit}_{\langle \pi \rangle} \left(\overline{\underline{1\ \ 4}} \right) &= \lbrace 
\overline{\underline{1\ \ 4}}\ ,\ 
\overline{\underline{3\ \ \mu_k}}\ ,\dots, 
\overline{\underline{2\ \ 5}}\rbrace \\
\vdots &= \vdots \\
{\rm Orbit}_{\langle \pi \rangle} \left(\overline{\underline{1\ \ \mu_k/2}} \right) &= \lbrace 
\overline{\underline{1\ \ \mu_k/2}}\ ,\ 
\overline{\underline{\mu_k/2-1\ \ \mu_k}}\ ,\dots, 
\overline{\underline{2\ \ \mu_k/2 +1}}\rbrace.
\end{align*}
The remaining tabloids form an orbit of length $\mu_k/2$:
\[
{\rm Orbit}_{\langle \pi \rangle} \left(\overline{\underline{1\ \ (\mu_k/2 + 1)}} \right) = \lbrace 
\overline{\underline{1\ \ (\mu_k/2 + 1)}}\ ,\ 
\overline{\underline{\mu_k/2\ \ \mu_k}}\ ,\dots, 
\overline{\underline{2\ \ \mu_k/2 +2}}\rbrace.
\]
Altogether, this accounts for $\mu_k/2 - 1$ orbits of length $\mu_k$ and one orbit of length $\mu_k/2$, whence
\[
p_{\sigma_k}(x) \ddef \left(x^{\mu_k}-1\right)^{\mu_k/2 -1} \left(x^{\mu_k/2}-1\right)
\]
divides the characteristic polynomial of $\sigma_k$ on $V_{\sigma_k}$.  But since $\dim V_{\sigma_k} = \binom{\mu_k}{2}$ and $p_{\sigma_k}(x)$ has degree $\binom{\mu_k}{2}$, it must be the full characteristic polynomial of $\sigma_k$ on $V_{\sigma_k}$, and thus divides $M_\mu(x)$.  

\medskip

\noindent \textbf{\fbox{Case 2: $\mu_k \geq 3$ is odd}} One checks that the following are $(\mu_k-1)/2$ distinct orbits, each of length $\mu_k$:
\begin{align*}
{\rm Orbit}_{\langle \pi \rangle} \left(\overline{\underline{1\ \ 2}} \right) &= \lbrace 
\overline{\underline{1\ \ 2}}\ ,\ 
\overline{\underline{1\ \ \mu_k}}\ , \dots
, \overline{\underline{2\ \ 3}}\rbrace \\
{\rm Orbit}_{\langle \pi \rangle} \left(\overline{\underline{1\ \ 3}} \right) &= \lbrace 
\overline{\underline{1\ \ 3}}\ ,\ 
\overline{\underline{2\ \ \mu_k}}\ ,\dots,  
\overline{\underline{2\ \ 4}}\rbrace \\
{\rm Orbit}_{\langle \pi \rangle} \left(\overline{\underline{1\ \ 4}} \right) &= \lbrace 
\overline{\underline{1\ \ 4}}\ ,\ 
\overline{\underline{3\ \ \mu_k}}\ ,\dots, 
\overline{\underline{2\ \ 5}}\rbrace \\
\vdots &= \vdots \\
{\rm Orbit}_{\langle \pi \rangle} \left(\overline{\underline{1\ \ (\mu_k + 1)/2}} \right) &= \lbrace 
\overline{\underline{1\ \ (\mu_k+1)/2}}\ ,\ 
\overline{\underline{(\mu_k-1)/2\ \ \mu_k}}\ ,\dots, 
\overline{\underline{2\ \ (\mu_k + 3)/2}}\rbrace.
\end{align*}
The vector-space-sum of these orbits is $\binom{\mu_k}{2}$-dimensional, hence is all of $V_{\sigma_k}$.  It follows that the characteristic polynomial of $\sigma_k$ acting on $V_{\sigma_k}$ is $\left(x^{\mu_k} -1\right)^{(\mu_k-1)/2}$, 
hence $\left(x^{\mu_k} -1\right)^{(\mu_k-1)/2}$ divides  $M_\mu(x)$, as claimed.
\end{proof}

Before proving the final, crucial proposition needed for Theorem \ref{(2^2,1^{n-4})}, we pause for an auxiliary lemma that we will put to immediate use.

\begin{lem} \label{aux_lem}
Let $m$ be an odd positive integer. Let $\pi \in S_{2m}$ be the product of $m$ disjoint transpositions.  Then the characteristic polynomial of $\pi$ on $\mathcal{M}^{(n-2,2)}$ is $(x-1)^{m}\left(x^2-1 \right)^{m^2-m}$, as claimed.
\end{lem}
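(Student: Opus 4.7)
The plan is a direct orbit computation on tabloids, mirroring the strategy of Proposition \ref{char_div} but adapted to the fact that $\pi$ has order $2$. As in the preceding proposition, I identify the tabloids of $\mathcal{M}^{(n-2,2)}$ (with $n=2m$) with $2$-subsets of $\{1,\dots,2m\}$. Since $\pi^2 = 1$, every orbit of $\langle\pi\rangle$ on these tabloids has length $1$ or $2$, and the characteristic polynomial will factor as $(x-1)^{\#\text{fixed}}(x^2-1)^{\#\text{length-$2$ orbits}}$.

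Without loss of generality I take $\pi = (1\,2)(3\,4)\cdots(2m-1\ \ 2m)$, a cycle type that does not affect the characteristic polynomial. A tabloid $\overline{\underline{i\ \ j}}$ is fixed by $\pi$ if and only if $\{\pi(i),\pi(j)\} = \{i,j\}$. Since $\pi$ has no fixed points on $\{1,\dots,2m\}$, this forces $\pi(i)=j$ and $\pi(j)=i$, i.e.\ $\{i,j\}$ is one of the $m$ transposed pairs $\{2k-1,2k\}$. This gives exactly $m$ fixed tabloids, contributing $(x-1)^m$. For any other $2$-subset $\{i,j\}$, neither $i$ nor $j$ is fixed by $\pi$ and its partner under $\pi$ is not the other element of the pair, so $\{\pi(i),\pi(j)\}\neq\{i,j\}$ and $\{i,j\}$ sits in an orbit of length exactly $2$.

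The remaining tabloids number $\binom{2m}{2}-m = m(2m-1)-m = 2(m^2-m)$, partitioning into $m^2-m$ orbits of length $2$ and contributing $(x^2-1)^{m^2-m}$. Multiplying the two factors yields $(x-1)^m(x^2-1)^{m^2-m}$, whose degree $m + 2(m^2-m) = 2m^2-m = \binom{2m}{2}$ matches $\dim\mathcal{M}^{(2m-2,2)}$, so all tabloids are accounted for. There is no real obstacle here; the only point to verify carefully is that the only fixed tabloids are the $m$ listed pairs, which uses crucially that $\pi$ is fixed-point free on $\{1,\dots,2m\}$. The hypothesis that $m$ is odd plays no role in the computation itself and is only imposed to ensure $\pi$ is an odd permutation, matching the context in which this lemma will be applied.
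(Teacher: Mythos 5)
Your proof is correct and follows essentially the same route as the paper: fix $\pi = (1\,2)(3\,4)\cdots(2m-1\ \,2m)$, observe that exactly the $m$ transposed pairs give fixed tabloids, and pair the remaining $\binom{2m}{2}-m$ tabloids into $m^2-m$ two-dimensional invariant subspaces each contributing $x^2-1$. Your extra remarks (that fixed-point-freeness of $\pi$ is what rules out other fixed tabloids, and that the oddness of $m$ is not used in the computation) are accurate but do not change the argument.
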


\begin{proof}
Conjugating $\pi$ if necessary, it suffices to take $\pi = (12)(34)\cdots((2m-1)(2m))$.  The tabloids 
\[
\overline{\underline{1 \ \ 2}}\ ,\ \overline{\underline{3 \ \ 4}}\ ,\  \dots \ , \ \overline{\underline{2m-1 \ \ 2m}}
\]
are fixed by $\pi$; the remaining $\binom{2m}{2} - m$ are not.  Since $\pi$ has order 2, the remaining tabloids can be paired into 2-dimensional stable subspaces of $\mathcal{M}^{(n-2,2)}$ and the characteristic polynomial on each subspace is $x^2-1$.  There are $\left(\binom{2m}{2} - m\right)/2 = m^2 - m$ such subspaces.  Together with the $m$ one-dimensional fixed subspaces, we conclude that the characteristic polynomial of $\pi$ is
\[
(x-1)^{m}\left(x^2-1 \right)^{m^2-m},
\]
as claimed.
\end{proof}

\begin{prop} \label{uni_prop}
Let $\pi$ be an odd permutation and suppose $\pi \not \in \mathcal{C}_{(n-2,2)}$.  Then $\pi$ has $-1$ as an eigenvalue on $\SSS^{(n-2,2)}$.
\end{prop}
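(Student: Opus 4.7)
The plan is to verify the hypothesis of Proposition \ref{reduction}. Since $\SSS^{(2^2,1^{n-4})} = \SSS^{(n-2,2)} \otimes \sgn$, Lemma \ref{switch} shows that an odd $\pi$ has $1$ as an eigenvalue on $\SSS^{(2^2,1^{n-4})}$ if and only if $\pi$ has $-1$ as an eigenvalue on $\SSS^{(n-2,2)}$. So by Proposition \ref{reduction} it suffices to prove that the multiplicity of $-1$ as a root of $M_\mu(x)$ strictly exceeds $\mathsf{E}(\mu)$ whenever $\mathcal{C}_\mu$ consists of odd permutations and $\mu \neq (n-2,2)$. I intend to obtain this bound by augmenting Proposition \ref{char_div} with contributions coming from tabloids whose two entries lie in the supports of \emph{different} cycles of $\pi$.

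For distinct cycles $\sigma_i, \sigma_j$ of $\pi$, of lengths $\mu_i$ and $\mu_j$, the subspace $V_{\sigma_i,\sigma_j}$ spanned by all tabloids $\overline{\underline{a\ \ b}}$ with $a$ in the support of $\sigma_i$ and $b$ in the support of $\sigma_j$ is $\langle\pi\rangle$-stable of dimension $\mu_i\mu_j$, and I would check that it decomposes into $\gcd(\mu_i,\mu_j)$ orbits, each of length $\lcm(\mu_i,\mu_j)$. Hence the characteristic polynomial of $\pi$ on $V_{\sigma_i,\sigma_j}$ is $\bigl(x^{\lcm(\mu_i,\mu_j)}-1\bigr)^{\gcd(\mu_i,\mu_j)}$, which contributes $\gcd(\mu_i,\mu_j)$ to the multiplicity of $-1$ in $M_\mu(x)$ precisely when $\lcm(\mu_i,\mu_j)$ is even, and $0$ otherwise. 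Because $\mathcal{M}^{(n-2,2)}$ decomposes as the internal direct sum of the within-subspaces $V_{\sigma_k}$ of Proposition \ref{char_div} and the cross subspaces $V_{\sigma_i,\sigma_j}$ (a dimension count $\sum_k \binom{\mu_k}{2} + \sum_{i<j}\mu_i\mu_j = \binom{n}{2}$ confirms this), these two sources together account for the full multiplicity of $-1$ in $M_\mu(x)$.

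I would then split the analysis by $e := \mathsf{E}(\mu)$, which is odd and positive since $\pi$ is odd. When $e \geq 3$, every pair of distinct even parts of $\mu$ has $\gcd \geq 2$ and even $\lcm$, so the $\binom{e}{2}$ even--even cross subspaces alone contribute at least $2\binom{e}{2} = e(e-1) > e$. When $e = 1$, let $\mu_j$ be the unique even part: if $\mu_j \geq 4$ then Proposition \ref{char_div} gives a within contribution of $\mu_j/2 - 1$ from the factor $(x^{\mu_j}-1)^{\mu_j/2-1}$, plus an extra $1$ when $\mu_j/2$ is even, yielding total at least $2 > 1$; if $\mu_j = 2$ then the within contribution is zero, but pairing the $2$-cycle with each of the $r-1$ odd parts contributes exactly $r-1$ to the multiplicity of $-1$ (since $\gcd(2,\text{odd})=1$ and $\lcm$ is even), which exceeds $1$ unless $r \leq 2$. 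The case $r = 2$ forces $\mu = (n-2,2)$, the excluded class, while $r = 1$ would require $n = 2 < 5$.

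The main obstacle is not a computational one but one of bookkeeping: the proof must confirm that $\mu = (n-2,2)$ (with $n$ odd) is the \emph{only} partition at which the inequality fails, matching Corollary \ref{nonunicor} on the nose. The delicate point is the $e = 1$, $\mu_j = 2$ subcase, where the cross-orbit contribution drops exactly to $\mathsf{E}(\mu)$ when $r = 2$ and strictly exceeds it as soon as a single additional odd part is present.
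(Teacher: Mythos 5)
Your proof is correct, and it takes a genuinely different route from the paper's. Both proofs start by reducing, via Proposition \ref{reduction}, to showing that the multiplicity of $-1$ as a root of $M_\mu(x)$ exceeds $\mathsf{E}(\mu)$. The paper then works with only the ``within-cycle'' subspaces from Proposition \ref{char_div}, supplementing them with the ad hoc Lemma \ref{aux_lem} about products of transpositions, and runs a layered case analysis on the parity of the number $k$ of even parts $\geq 4$, ultimately reducing to the sporadic case $k=0$, $N=1$ which is settled by a hands-on tabloid computation. Your key new ingredient is the uniform description of the ``cross-cycle'' subspaces $V_{\sigma_i,\sigma_j}$: these are $\langle\pi\rangle$-stable of dimension $\mu_i\mu_j$, decompose into $\gcd(\mu_i,\mu_j)$ orbits of length $\lcm(\mu_i,\mu_j)$, and hence contribute $\bigl(x^{\lcm(\mu_i,\mu_j)}-1\bigr)^{\gcd(\mu_i,\mu_j)}$ to $M_\mu(x)$; the dimension identity $\sum_k\binom{\mu_k}{2}+\sum_{i<j}\mu_i\mu_j=\binom{n}{2}$ confirms that these together with the $V_{\sigma_k}$ exhaust $\mathcal{M}^{(n-2,2)}$. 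This recovers the paper's Lemma \ref{aux_lem} as the special case where all parts involved equal $2$, and it collapses the paper's subcase tree: when $\mathsf{E}(\mu)=e\geq 3$ the even--even cross pairs alone give $2\binom{e}{2}=e(e-1)>e$, and when $e=1$ you only need to distinguish $\mu_j\geq 4$ (within contribution already $\geq 2$) from $\mu_j=2$ (cross contributions $r-1$, with $r=2$ exactly the excluded partition $(n-2,2)$). The net effect is a cleaner, more symmetric argument that makes the failure at $\mu=(n-2,2)$ visibly sharp, at the modest cost of verifying the gcd/lcm orbit count once; the paper's approach avoids stating that general orbit count but pays for it with the intricate Subcases 2a/2b and the residual $k=0$, $N=1$ computation.
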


\begin{proof}
Let $\mu = (\mu_1, \dots, \mu_r) \vdash n$ and let $\pi = \sigma_1 \dots \sigma_r$ be an odd permutation with $\sigma_i$ of order $\mu_i$.  Let 
$M_\mu(x)$ be the characteristic polynomial of $\pi$ on $\mathcal{M}^{(n-2,2)}$  and recall that $\mathsf{E}(\mu)$ is the total number of even $\mu_i$ that comprise $\mu$.  Since $\pi$ is odd, $\mathsf{E}(\mu)$ is an odd positive number. By Proposition \ref{reduction}, if we can show that the multiplicity of $-1$ as a root of $M_\mu(x)$ is $> \mathsf{E}(\mu)$, we will be done.  By Proposition \ref{char_div}(1), for each even $\mu_i$ comprising $\mu$, $M_\mu(x)$ is divisible by $\left(x^{\mu_i} - 1\right)^{\mu_i/2 - 1}$.  Therefore, for each even $\mu_i$, $-1$ is a root of $M_\mu(x)$ of multiplicity $\geq \mu_i/2-1$. 

\medskip

\noindent \textbf{\fbox{Case 1: all even $\mu_i$ are $\geq 4$}} By Proposition \ref{char_div}, each $\mu_{i}$ contributes an eigenvalue $-1$ with multiplicity $\mu_{i}/2 -1 + 1 = \mu_{i}/2$.  Since each such $\mu_{i} \geq 4$, this is a total contribution of $\geq 2\mathsf{E}(\mu) > \mathsf{E}(\mu)$ eigenvalues equal to $-1$.  

\medskip

\noindent \textbf{\fbox{Case 2: at least one even $\mu_i = 2$}} We introduce additional notation for this portion of the proof.  Let $E(\mu) = N$ and partition the set of even $\mu_i$ into two subsets:
\[
\lbrace \mu_{i_1},\dots \mu_{i_k} \rbrace \qquad \text{and} \qquad \lbrace \mu_{i_{k+1}},\dots \mu_{i_N} \rbrace,
\]
where each entry in the former is $\geq 4$ and each in the latter equals 2.  

Using similar reasoning as in \textbf{Case 1} ,by Proposition \ref{char_div}, each $\mu_{i_j}$ from the former set contributes an eigenvalue $-1$ with multiplicity $\mu_{i_j}/2 -1 + 1 = \mu_{i_j}/2$.  Since each such $\mu_{i_j} \geq 4$, this is a total contribution of $\geq 2k$ eigenvalues that equal $-1$.  

For the latter set, each \emph{pair} of transpositions contributes eigenvalue $-1$ with multiplicity 2.  Indeed, the element $(ab)(cd) \in S_n$ acts on the six-dimensional space
\[
\C[\overline{\underline{a \ \ b}}\ ,\ \overline{\underline{c \ \ d}}\ ,\  \overline{\underline{a \ \ c}}\ , \ \overline{\underline{a \ \ d}}\ ,\ \overline{\underline{b \ \ c}}\ ,\  \overline{\underline{b \ \ d}}]
\]
with characteristic polynomial $(x-1)^2(x^2-1)^2$. Now we must show that in every situation that can arise, the total multiplicity of $-1$ as an eigenvalue is $ > \mathsf{E}(\mu) = N$.  Recall that $N$ is odd.

\medskip

\noindent \textbf{\fbox{Subcase 2a: $k$ is odd}}  If $k$ is odd, then $N-k$ is even.  In particular, the $\sigma_i$ for which $\mu_i=2$ contribute $-1$ as an eigenvalue with multiplicity at least 
\[
\frac{N-k}{2} \cdot 2  = N-k,
\]
by our previous remark.  The remaining even $\mu_i$ contribute $\geq 2k$ factors of $-1$ by previous arguments as well.  Taken together, we have eigenvalue $-1$ with multiplicity  $\geq N-k+2k = N+k >N$, since $k$ is odd.

\medskip

\noindent \textbf{\fbox{Subcase 2b: $k$ is even}} If $k$ is even, then $N-k$ is odd.  Now we apply Lemma \ref{aux_lem} to the product $\sigma_{i_{k+1}}\sigma_{i_{k+2}} \cdots \sigma_{i_{N}}$ and conclude that it contributes eigenvalue $-1$ with multiplicity
\[
(N-k)^2-(N-k) = N^2 + (-2k - 1)N + (k^2 + k).
\]
Together with $\geq 2k$ factors of $-1$ coming from the $\mu_i \geq 4$, we see that the total multiplicity of $-1$ as an eigenvalue is at least
\begin{align} \label{minus_mult}
N^2 + (-2k - 1)N + (k^2 + k) + 2k = N^2 + (-2k - 1)N + (k^2 + 3k).
\end{align}

If $k \geq 2$, then (\ref{minus_mult}) is always $>N$.  If $k=0$, then (\ref{minus_mult}) reduces to $N^2-N$.  Since $N$ is odd, we have $N\geq 1$.  When $N \geq 3$, we have $N^2 - N > N$.  Therefore, we have reduced the entire proof to the case where $k=0$ and $N=1$.

If $k=0$ and $N=1$, then $E(\mu) = 1$ and we must show that $\pi$ has eigenvalue $-1$ on $\mathcal{M}^{(n-2,2)}$ with multiplicity $\geq 2$.  Consider the factorization $\pi = \sigma_1 \cdots \sigma_r$.  Exactly one of the $\sigma_i$ is a transposition, and by the assumption that $k=0$ and $N=1$, it is the only odd permutation comprising $\pi$. Denote this transposition by $\sigma$ and note that $\mu_1$ is odd and $> 2$.  Further note that if $r=2$, then we are in the case $\pi \in \mathcal{C}_{(n-2,2)}$ which is excluded by hypothesis.  Therefore $r\geq 3$.  
We have two final cases to consider.

If $\mu_1 \geq \mu_2 > 2$, then by applying  Lemma \ref{big_mult} separately to the products $\sigma_1\sigma$ and $\sigma_2\sigma$, we are furnished with two divisors of the characteristic polynomial of $\mathcal{M}^{(n-2,2)}$:
\begin{align*}
&(x^{\mu_1} -1)^{(\mu_1-1)/2}(x^{2\mu_1} -1)(x-1),\text{ and} \\
&(x^{\mu_2} -1)^{(\mu_2-1)/2}(x^{2\mu_2} -1)(x-1).
\end{align*}
Together, these contribute $-1$ as a root with multiplicity $2 > 1 = N$. 

Finally, we have the case $\mu_1 \geq3$, $\mu_2=2$, and $\mu_i = 1$ for $i=3,\dots,r$.  Let us write $\sigma_1 = (1\cdots\mu_1)$, $\sigma = ((\mu_1+1)(\mu_1+2))$, and $\sigma_3 = (\mu_1+3)$.  Then $\sigma \sigma_3$ acts nontrivially on the tabloid $\overline{\underline{\mu_1 +1 \ \ \mu_1+3}}$, producing eigenvlaue $-1$ of multiplicity 1.  
 Identical reasoning as in the previous case shows that $\sigma_1\sigma$ also contributes eigenvalue $-1$ with multiplicity 1.  Together, this shows that $\pi$ has eigenvalue $-1$ with multiplicity $\geq 2$, and thus completes the proof.
\end{proof}

We finish this section by proving Theorem  \ref{(2^2,1^{n-4})}, restated here for convenience.

\begin{thm}[Theorem \ref{(2^2,1^{n-4})}]
Let $n \geq 5$.  Then the Specht module $\SSS^{(2^2,1^{n-4})}$ is unisingular if and only if $n$ is even.
\end{thm}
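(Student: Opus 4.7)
The plan is to split the statement into its two directions and show that, with the results already assembled in this section, the theorem reduces to a short parity bookkeeping argument. The essential idea is that the only odd permutations not covered by Proposition \ref{uni_prop} lie in $\mathcal{C}_{(n-2,2)}$, and the parity of $n$ controls whether that class actually contains odd permutations.

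For the ``only if'' direction, suppose $n$ is odd. A permutation of cycle type $(n-2,2)$ has sign $(-1)^{n-3}\cdot(-1)=(-1)^{n-2}=-1$, so $\mathcal{C}_{(n-2,2)}$ consists of odd permutations, and Corollary \ref{nonunicor} applies verbatim to conclude that $\SSS^{(2^2,1^{n-4})}$ is not unisingular.

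For the ``if'' direction, suppose $n$ is even and fix $\pi\in S_n$. If $\pi$ is even, Lemma \ref{basic_lem} gives $1$ as an eigenvalue of $\pi$ on $\SSS^{(2^2,1^{n-4})}$. If $\pi$ is odd, the same sign computation shows that $\mathcal{C}_{(n-2,2)}$ now consists of even permutations, so $\pi\notin\mathcal{C}_{(n-2,2)}$ and Proposition \ref{uni_prop} produces $-1$ as an eigenvalue of $\pi$ on $\SSS^{(n-2,2)}$. Since $\SSS^{(2^2,1^{n-4})}=\SSS^{(n-2,2)}\otimes\sgn$, Lemma \ref{switch} converts this $-1$ eigenvalue on $\SSS^{(n-2,2)}$ into a $1$ eigenvalue on $\SSS^{(2^2,1^{n-4})}$, which is what we need.

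The main obstacle is not in this assembly step but rather in the two results it feeds on: Lemma \ref{big_mult}, which produces the explicit factorization of $M_{(n-2,2)}(x)$ by direct orbit computation on tabloids in the exceptional cycle type, and Proposition \ref{uni_prop}, whose case analysis on $\mathsf{E}(\mu)$ and the subdivision by how many even parts equal $2$ is what makes the multiplicity of $-1$ in $M_\mu(x)$ strictly exceed $\mathsf{E}(\mu)$. Once those are granted, the theorem itself requires only the one-line verification that $\mathcal{C}_{(n-2,2)}\subseteq A_n$ precisely when $n$ is even.
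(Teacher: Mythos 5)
Your proof is correct and follows essentially the same route as the paper: Corollary \ref{nonunicor} dispatches odd $n$, while Lemma \ref{basic_lem}, Proposition \ref{uni_prop}, and Lemma \ref{switch} together handle even $n$. Your explicit sign computation showing $\mathcal{C}_{(n-2,2)} \subseteq A_n$ exactly when $n$ is even is a welcome clarification of why Proposition \ref{uni_prop} applies to \emph{every} odd permutation in that case---a step the paper's own proof leaves implicit.
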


\begin{proof}
If $n$ is odd then $\SSS^{(2^2,1^{n-4})}$ is not unisingular by Corollary \ref{nonunicor}.  Let $n$ be even.  By Proposition \ref{uni_prop} every odd permutation of $S_n$ has $-1$ as an eigenvalue on $\SSS^{(n-2,2)}$, which implies every odd permutation of $S_n$ has 1 as an eigenvalue on $\SSS^{(2^2,1^{n-4})}$ by Lemma \ref{switch}.  If $\pi$ is an even permutation, then $\pi$ has 1 as an eigenvalue on $\SSS^{(2^2,1^{n-4})}$ by Lemma \ref{basic_lem}.  This completes the proof of the theorem.
\end{proof}

\section{Observations and Future Work}

Given a symmetric group $S_n$ and a Specht module $\SSS^\lambda$, it is possible to determine whether or not $\SSS^\lambda$ is unisingular purely through computer calculation.  The \textsf{GAP} \cite{gap} package \textsf{young.g}, available (with user instructions) at 
\begin{center}
\href{https://nickerson.org.uk/atlas/progs/young/index.html}{https://nickerson.org.uk/atlas/progs/young/index.html} 
\end{center}
allows one to compute explicit matrix generators for $S_n$ in the representation $\SSS^\lambda$.  Equipped with these explicit matrices, it is then routine to compute the conjugacy classes and their characteristic polynomials using any number of computer algebra packages, such as \textsf{GAP} or \textsf{Magma} \cite{magma}.

However, as $n$ increases, the number of Specht modules (as well as their dimensions) makes explicit computation unwieldy, if not practically impossible.  That said, we have determined exactly which $\SSS^\lambda$ are unisingular for all $S_n$ with $n\leq 10$ (with one exception).  We present our data in the table below with the following notes.

For each $n=3,\dots,10$, we only record the $\SSS^\lambda$ that are not unisingular.  If $\SSS^\lambda$ is not unisingular, then there must exist at least one partition $\mu \vdash n$ such that the elements of $\mathcal{C}_\mu$ do not have 1 as an eigenvalue.  We call such a $\mathcal{C}_\mu$ an \textsf{offending class}.  In the table below we list the $\lambda$ for which $\SSS^\lambda$ is not unisingular and, for each such $\SSS^\lambda$, all offending classes $\mathcal{C}_\mu$.  We also record the total number of irreducible representations of $S_n$; this is given by the well-known partition function $\mathcal{P}(n)$.  Some further notes:
\begin{itemize}
\item the modules $\SSS^{(1^n)}$ are never unisingular since they are characters that take the value $-1$ on odd permuations;
\item the modules $\SSS^{(n-1,1)}$ are never unisingular by Proposition \ref{(n-1,1)}, with offending class $\mathcal{C}_{(n)}$;
\item the modules $\SSS^{(2,1^{n-2})}$ are not unisingular if and only if $n$ is odd by Theorem \ref{(2,1^{n-2})}, with offending class $\mathcal{C}_{(n)}$;
\item the modules $\SSS^{(2^2,1^{n-4})}$ are not unisingular if and only if $n$ is odd by Theorem \ref{(2^2,1^{n-4})}, with offending class $\mathcal{C}_{(n-2,2)}$.
\end{itemize}
In order not to clutter the table, we omit these examples and only list the $\SSS^\lambda$ that do not fall into these families; we call such $\lambda$  \textsf{exceptional}. We only list the offending classes for the exceptional $\lambda$. 

\begin{rmk}
In reading the table we make one caveat.  The highest-dimensional Specht module for $S_{10}$ is $\SSS^{(4,3,2,1)}$, with 
\[
\dim \SSS^{(4,3,2,1)} = f^{(4,3,2,1)} = \frac{10!}{7\cdot 5^2\cdot3^3} = 768.
\]
It is beyond the capability of our hardware to compute the conjugacy classes (this involves working with matrices with 
$589824 = 768 \times 768$ elements).  This is the only Specht module that we were unable to compute explicitly.  We denote this in the table with an asterisk $(^*)$, since it is possible (though unlikely) that $\SSS^{(4,3,2,1)}$ is not unisingular. 
\end{rmk}

\begin{center}
\begin{tabular}{|r|r|r|r|r|}
\hline
$n$ & $\mathcal{P}(n)$ & Total Unisingular & Exceptional $\lambda$ & Offending $\mu$ \\
\hline
2 & 2 & 1 &&\\
\hline
3 & 3 & 1 & & \\
\hline
4& 5 &  3 && \\
\hline
5 & 7 & 3 && \\
\hline
6 & 11 & 8
& $(2,2,2)$ & $(6)$ \\
\hline
7 & 15& 11 &&\\
\hline
8 & 22 & 18 &
 $(4,4)$ & $(5,3)$ \\
&&& $(2,2,2,2)$ & $(5,3)$ \\
\hline
9 & 30 & 26 && \\
\hline
10 & 42&  39$^*$ & $(2,2,2,2,2)$ & $(5,3,2)$ \\
\hline
\end{tabular}
\end{center}

We do not have enough data to responsibly state it as a conjecture, so we will merely observe that in all the exceptional cases covered in the table, as well as for the non-exceptional $\SSS^{(n-1,1)}$ and  $\SSS^{(2^2,1^{n-4})}$, if a Specht module is not unisingular, then there is always exactly one offending class.  It would be interesting to pursue this as a separate, but adjacent question in a future work.

It would also be interesting to focus on other families of Specht modules where an inductive process may be helpful, similar to the case of the $\SSS^{(n-k,1^k)}$ above.  For example, consider \textsf{two-rowed partitions} $\lambda = (n-k,k)$. Using Young's Rule \cite[Thm.~2.11.2]{sagan}, we have the decomposition
\[
\mathcal{M}^{(n-k,k)} = \bigoplus_{j=0}^{k} \SSS^{(n-j,j)}.
\]
It may be feasible to work inductively with tabloids to study the multiplicity of 1 as an eigenvalue of $\mathcal{M}^{(n-k,k)}$ (similar to our work in Section \ref{penultimate}) in order to determine which $\SSS^{(n-k,k)}$ are unisingular.

\end{document}